\newcommand{\pnorm}[1]{\Vert #1 \Vert_{\ell^p}}
\renewcommand{\Re}{\operatorname{Re}}
\title{Quasi-nilpotency of generalized Volterra operators on sequence spaces}
\author{N. CHALMOUKIS}
\author{G. STYLOGIANNIS}
\address{Dipartimento di Matematica, Universit\`a di Bologna, 40126, Bologna, Italy}\email{nikolaos.chalmoukis2@unibo.it}
\address{Department of Mathematics, University of Thessaloniki, 541 24, Thessaloniki, Greece} \email{stylog@math.auth.gr}
\thanks{The first author is a member of INdAM}
\subjclass[2010]{40C05; 47B37; 40G05 ;46A45}
\keywords{Schur multipliers, Hankel operators,  Generalized Volterra operator}
\begin{document}
\maketitle

\begin{abstract}
We study the quasi-nilpotency of generalized Volterra operators on spaces  of power series with Taylor coefficients in weighted $\ell^p$ spaces $1<p<+\infty$ . Our main result is that when an analytic symbol $g$ is a multiplier for a weighted $\ell^p$ space, then the corresponding generalized Volterra operator $T_g$ is bounded on the same space and  quasi-nilpotent, i.e. its spectrum is $\{0\}.$ This improves a previous result of A. Limani and B. Malman in the case of sequence spaces. Also combined with known results about multipliers of $\ell^p$ spaces we give non trivial examples of bounded quasi-nilpotent generalized Volterra operators on $\ell^p$.

We approach the problem by introducing what we call Schur multipliers for lower triangular matrices and we construct a family of Schur multipliers for lower triangular matrices on $\ell^p, 1<p<\infty$ related to summability kernels. To demonstrate the power of our results we also find a new class of Schur multipliers for Hankel operators on $\ell^2 $, extending a result of E. Ricard.
\end{abstract}
\section{Introduction}
Let $H(\bD)$ be the space of holomorphic functions in the unit disc equipped with the topology of local uniform convergence which renders it a Fr\'echet space. A {\it Banach space of analytic functions } in the unit disc is a Banach space $X \subseteq H(\bD)$ such that the inclusion is continuous. One of the most classical examples is the family of Hardy spaces $H^p, 1\leq p < + \infty$ which consists of functions $f\in H(\bD)$ such that 
\[\norm{f}_{H^p}:= \sup_{0<r<1} \Big( \int_0^{2\pi}|f(re^{i\theta})|^pd\theta \Big)^{1/p} < + \infty.\]

For $p=\infty$, $H^\infty$ is just the algebra of bounded analytic functions in the unit disc. The literature on Hardy spaces is vast, but here we would like to concentrate on the Taylor coefficients of Hardy functions. Suppose we consider the Banach space of analytic functions $\ell^p_A$ which consists of functions $f\in H(\bD)$ such that its Taylor coefficients $\hat{f}(n)$ belong to $\ell^p$, i.e.,
\[ \norm{f}_{\ell^p_A} : = \Big( \sum_{n=0}^\infty |\hat{f}(n)|^p \Big)^{1/p} < + \infty.\]

Then the classical Hausdorff-Young inequalities \cite[Theorem 6.1]{Duren} say that for $1\leq p < + \infty $ and $q$ its conjugate exponent $(p^{-1}+q^{-1}=1)$,
\[ H^p \subseteq \ell^q_A, \quad (1\leq p \leq 2), \quad \text{and} \quad \ell^q_A \subseteq H^p, \quad (2\leq p < +\infty). \]

It is also known that the inclusions are strict unless $p=2$ in which case $\ell^2_A = H^2.$ This superficially might suggest a connection between the Hardy space $H^p$ and the sequence space $\ell^q_A$, but although function theory and operator theory of Hardy spaces is a mature and well developed subject, only recently there has been some interest on studying $\ell^p_A, p\neq 2$ as function spaces, and the theory is still at its infancy. The monograph \cite{Cheng2020} of R. Cheng et al contains most of what is currently known. 

In particular a problem which has attracted a lot of interest is the characterization of multipliers for $\ell^p_A$. In general if $X$ is Banach space of analytic functions in the unit disc, its {\it multiplier space} $\Mult(X)$ is the space of all $g\in H(\bD)$ such that $g \cdot f \in X, \forall f \in X.$ In other words (via the closed graph theorem) $g$ is a multiplier of $X$ if the {\it multiplication operator } $M_f(g) = g \cdot f$ is bounded on $X$.

If $X=H^p$ it is known and not very difficult to prove that $\Mult(H^p) = H^\infty$. In general it is true that $\Mult(X) \subseteq H^\infty,$ but it can happen that the inclusion is strict. In particular $\Mult(\ell^p_A)$ is a strict subset of $H^\infty$ if $p\neq 2$  \cite{Lebedev1998} as it doesn't contain the singular inner function 
$ \exp (- \frac{1+z}{1-z}) $. As far as we know there exist no simple analytic characterization of the functions in the space $\Mult(\ell^p_A)$ when $p\neq 2$.

In this article we would like to concentrate on a different operator acting on $\ell^p_A$ and its connection with multipliers. Let $g\in H(\bD)$ and consider the {\it generalized Volterra operator}; 
\[ T_g(f)(z):=\int_0^zf(t)g'(t)dt. \]
There exists also the {\it generalized Ces\'aro operator} 
\[ C_g(f)(z):=\frac{1}{z} \int_0^zf(t)g'(t)dt. \]
The reason for this terminology is that if $g(z)=\log\frac{1}{1-z}$ then the matrix which represents the operator $C_g$ with respect to the  orthonormal basis of the monomials in $\ell^2_A$ is the Ces\'aro matrix 
\[ C:=
\begin{pmatrix}
1       & 0       & 0       &\cdots \\
\frac12 & \frac12 & 0       &\cdots \\
\frac13 &\frac13  & \frac13 & \cdots \\
\vdots  &         &         & \ddots
\end{pmatrix}.
\]
The generalized Volterra operator seems to have been introduced first by Pommerenke \cite{Pommerenke1977}, in connection to the John--Nirenberg inequality, and since then it has been intensively studied in terms of its boundedness and compactness properties in a variety of settings (see \cite{Aleman1995, Brevig2019, Pau2016}), but also finer properties such as Schatten ideals, spectral properties and quasi-nilpotency  have been examined on various spaces (see \cite{Aleman2009, Pelez2012, Limani2020}).

In terms of boundedness, on the spaces that we work with,  the operators $C_g$ and $T_g$ are bounded simultaneously therefore we will use the one that is more convenient.  One can ask, as for multipliers, which is the space of symbols $g$ such that $T_g$ acts boundedly on a given Banach space of analytic functions $X$. There is no commonly accepted notation for this space, but we will denote it by $T(X)$. It can be proven that $T(X)$ carries a Banach space structure \cite{Siskakis1999}.  In this case as well, the state of the art is the same. There is a nice characterization of holomorphic functions $g$ which give rise to a bounded operator $T_g$ on $H^p$. The space in question \cite{Aleman1995}, which does not depend on $p$, turns out to be the well known $BMOA$ space, the space of analytic functions in $H^1$ such that their non tangential boundary values have bounded mean oscillation. Whereas for the sequence spaces $\ell^p_A$ almost nothing is known. Even giving a nontrivial example of a function in $ T(\ell^p_A) $ requires a moments thought. In fact such an example is provided by Hardy's inequality, which asserts that for every sequence of positive numbers $\{a_n\} \in \ell^p$, 

\[ \sum_{n=1}^\infty \Big( \frac{1}{n}\sum_{k=1}^n a_k \Big)^p \leq \big(\frac{p}{p-1}\big)^p \sum_{n=1}^\infty a_n^p. \]

which in our language can be written, 
\[ \norm{C_{\log\frac{1}{1-z}} f }_{\ell^p_A} \leq \frac{p}{p-1} \norm{f}_{\ell^p_A}, \quad \forall f \in \ell^p_A. \]

Our aim here is not to give an analytic characterization of the space $T(\ell^p_A)$,  a probably infinitely complex problem, but we content ourselves with the more modest problem of studying the relation between $\Mult(\ell^p_A) $ and $ T(\ell^p_A) $ and try to understand the spectral picture of $T_g$ when $g\in \Mult(\ell^p_A).$ Our motivation is the following theorem \cite[Theorem 2.2]{Limani2020}

\begin{thm}
     Let $X$ be a Banach space of analytic functions in the unit disc, which contains the constants and such that the algebra $\cB(X)$ of bounded linear operators on $X$ contains the multiplication operators $M_g$ and the generalized Volterra operator whenever $g\in H^\infty.$ Then we have that $\sigma(T_g|X)=\{0\}$ whenever $T_g$ lies in the norm closure of $\{ T_h: h\in H^\infty \}$ in $\cB(X).$
\end{thm}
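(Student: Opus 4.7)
The plan is to handle symbols $h\in H^\infty$ first, and then extend by approximation. For $h\in H^\infty$ both $M_h$ and $T_h$ are bounded on $X$ by hypothesis, so it makes sense to study the commutation of the two. A direct computation of $M_h T_h f - T_h M_h f$ on analytic $f$ factors out the bracketed kernel $(h(z)-h(t))$ inside the integral and yields the algebraic identity
\[ [M_h, T_h] \;=\; T_h^2. \]
An easy induction, using $[M_h, AB]=[M_h,A]B+A[M_h,B]$, then produces $[M_h, T_h^n] = n\, T_h^{n+1}$, so that $T_h^{n+1} = \tfrac{1}{n}[M_h, T_h^n]$. Taking norms gives the recursion $\|T_h^{n+1}\|\le \tfrac{2\|M_h\|}{n}\|T_h^n\|$, and iterating,
\[ \|T_h^n\|_{X\to X} \;\leq\; \frac{(2\|M_h\|_{X\to X})^{n-1}}{(n-1)!}\,\|T_h\|_{X\to X}. \]
By the Gelfand formula $r(T_h)=\lim_n \|T_h^n\|^{1/n}=0$, hence $\sigma(T_h)=\{0\}$.

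For the general case, pick $\{h_n\}\subseteq H^\infty$ with $T_{h_n}\to T_g$ in operator norm. For each fixed $k$, $T_{h_n}^k\to T_g^k$ in norm, so $\|T_g^k\|=\lim_n\|T_{h_n}^k\|$. If the approximants can be arranged so that $\sup_n\|M_{h_n}\|_{X\to X}<\infty$, the factorial estimate above passes to the limit and forces $\|T_g^k\|^{1/k}\to 0$, giving $\sigma(T_g)=\{0\}$.

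The hard part is precisely this uniform control. Norm convergence $T_{h_n}\to T_g$ in $\cB(X)$ does not by itself impose any bound on the multiplier norms $\|M_{h_n}\|_{X\to X}$, since $T(X)$ can be strictly larger than $\Mult(X)$ (already in the Hardy-space setting, $BMOA\supsetneq H^\infty$). Two natural strategies present themselves. First, one could choose the approximating sequence carefully---for instance by dilations $h_n(z)=g(r_nz)$ or by Ces\`aro means of the Taylor polynomials of $g$---and try to prove, using the hypothesis, that $\|M_{h_n}\|$ stays bounded while $T_{h_n}\to T_g$. Second, one can attempt a resolvent-theoretic perturbation: for $\lambda\neq 0$ the operator $\lambda I-T_h$ is invertible (by the first step) and one would write
\[ \lambda I - T_g \;=\; (\lambda I - T_h)\bigl(I-(\lambda I-T_h)^{-1}(T_g-T_h)\bigr), \]
inverting the second factor by Neumann series once $\|(\lambda I-T_h)^{-1}\|\cdot\|T_g-T_h\|<1$, with the first norm controlled by the quantitative bound above. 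In either approach, the crux is to reconcile operator-norm approximation of $T_g$ with quantitative control on the multiplier norms of the symbols; resolving this delicate interplay between $\Mult(X)$ and $T(X)$ is the essential content of the theorem.
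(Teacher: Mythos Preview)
Your commutator argument for $h\in H^\infty$ is correct and elegant: the identity $[M_h,T_h]=T_h^2$ does give factorial decay of $\|T_h^n\|$ and hence $\sigma(T_h)=\{0\}$. But, as you yourself concede, the passage to the closure is incomplete, and neither of your two strategies can be made to work as stated. In the first you have no mechanism to bound $\|M_{h_n}\|$ along an approximating sequence. In the second, your own estimate gives $\|(\lambda I-T_h)^{-1}\|$ of order $e^{2\|M_h\|/|\lambda|}$, so the Neumann condition $\|(\lambda I-T_h)^{-1}\|\cdot\|T_g-T_h\|<1$ again demands uniform control of $\|M_h\|$ that the hypothesis simply does not provide.

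The missing idea is an \emph{explicit} resolvent identity that decouples these two issues. Differentiating $(I-\lambda^{-1}T_g)u=f$ and using the integrating factor $e^{-h/\lambda}$ for the $h$-part of $g'$ leads (for $f(0)=0$) to
\[
(I-\lambda^{-1}T_g)^{-1}f \;=\; M_{e^{h/\lambda}}\,(I-\lambda^{-1}T_{g-h})^{-1}\bigl(M_{e^{-h/\lambda}}+T_{e^{-h/\lambda}}\bigr)f .
\]
Now the only resolvent that appears is that of $T_{g-h}$, which is bounded by a plain Neumann series as soon as $\|T_{g-h}\|<|\lambda|$; this requires nothing about $\|M_h\|$. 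The remaining factors $M_{e^{\pm h/\lambda}}$ and $T_{e^{-h/\lambda}}$ are bounded on $X$ simply because $e^{\pm h/\lambda}\in H^\infty$ and the standing hypotheses apply. Their norms may be huge, but for each fixed $\lambda\neq 0$ a single choice of $h$ suffices, so no uniformity is needed. This is how the argument in \cite{Limani2020} (and its adaptation in the present paper) actually closes the gap.
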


Notwithstanding the apparent interest of this theorem in some respects is not optimal. First of all it is required that the Banach space has as multiplier algebra the whole $H^\infty$ which as we have seen is a rather special case. Furthermore, and maybe more importantly it is required {\it a priori} the finiteness of the norm of $T_g$ whenever $g$ is a multiplier for $X$. 

Our main result is an optimal version of this theorem when $X$ is a (weighted) $\ell^p$ space. Suppose that $\omega := \{ \omega_n \} $ is a positive weight function such that $\lim_{n} \omega_n/\omega_{n+1} = 1$, then the weighted sequence space $\ell^p_A(\omega)$ is defined as the space of $f\in H(\bD)$ such that 
\[ \norm{f}_{\ell^p_A(\omega)}:=\Big( \sum_{n=0}^\infty |\hat{f}(n)|^p\omega_n \Big)^{1/p}. \]

\begin{thm} \label{main:thm}
     Let $\ell^p_A(\omega)$ be a weighted sequence space. Then the multiplier algebra of $\ell^p_A(\omega)$ is contained in the space of symbols that induce bounded generalized Volterra operators. Explicitly,
     \[ \Mult(\ell^p_A(\omega)) \subseteq T(\ell^p_A(\omega)). \] 
    Furthermore if $g$ is in the norm closure of $\Mult(\ell^p_A(\omega))$  in $T(\ell^p_A(\omega))$ then $T_g$ is quasi--nilpotent, i.e.,  $\sigma(T_g|\ell^p_A(\omega)) = \{0\}.$
\end{thm}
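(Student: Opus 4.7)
The plan is to realise $T_g$ as an entrywise product against the matrix of $M_g$, bound powers of $T_g$ via an algebraic identity coming from the kernel formula, and then pass to the closure. In the monomial basis of $\ell^p_A(\omega)$ one computes
\[ (M_g)_{n,k}=\hat{g}(n-k),\quad n\geq k,\qquad (T_g)_{n,k}=\tfrac{n-k}{n}\,\hat{g}(n-k),\quad n>k, \]
so that $T_g=\Lambda\odot M_g$, where $\Lambda_{n,k}:=\tfrac{n-k}{n}$ and $\odot$ denotes the Schur (entrywise) product. The technical heart of the paper is to show that this matrix $\Lambda$ is a Schur multiplier for lower triangular matrices on $\ell^p_A(\omega)$ with finite norm $C_\Lambda$ independent of the symbol; this is the main obstacle of the whole proof, tackled via the summability kernel construction announced in the abstract. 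Granted it, Schur multiplication immediately yields $\|T_g\|_{op}\leq C_\Lambda\|M_g\|_{op}$, which is the inclusion $\Mult(\ell^p_A(\omega))\subseteq T(\ell^p_A(\omega))$.

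For quasi-nilpotency when $g\in\Mult(\ell^p_A(\omega))$, an induction on $n$ produces the kernel formula $T_g^n(f)(z)=\frac{1}{(n-1)!}\int_0^z (g(z)-g(t))^{n-1}f(t)g'(t)\,dt$. Using $(g(z)-g(t))^{n-1}g'(t)\,dt=-\tfrac{1}{n}\,d(g(z)-g(t))^n$ to integrate by parts, then binomially expanding the resulting $(g(z)-g(t))^n$ and integrating by parts once more (with $(g^m)'=mg^{m-1}g'$ and $g(0)=0$), one arrives after telescoping the combinatorial terms at the algebraic identity
\[ n!\,T_g^n \;=\; \sum_{j=1}^{n}\binom{n}{j}(-1)^{j-1}\,M_{g^{n-j}}\,T_{g^j}. \]
Since $\Mult(\ell^p_A(\omega))$ is a Banach algebra, $\|M_{g^{n-j}}\|\leq\|M_g\|^{n-j}$, and the inclusion applied to the symbol $g^j$ gives $\|T_{g^j}\|\leq C_\Lambda\|M_g\|^j$. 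Inserting both bounds in the identity,
\[ n!\,\|T_g^n\|_{op}\;\leq\;C_\Lambda\,\|M_g\|^n\sum_{j=1}^n\binom{n}{j}\;\leq\;C_\Lambda\,(2\|M_g\|)^n, \]
so that $\|T_g^n\|_{op}^{1/n}\to 0$ by Stirling and $\sigma(T_g|\ell^p_A(\omega))=\{0\}$.

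To pass to the closure, fix $\lambda\neq 0$ and choose multipliers $g_k$ with $\|T_g-T_{g_k}\|_{op}\to 0$. The previous bound, summed as a Neumann series, gives
\[ \|(\lambda-T_{g_k})^{-1}\|_{op}\;\leq\;\tfrac{C_\Lambda}{|\lambda|}\,\exp\!\Bigl(\tfrac{2\|M_{g_k}\|}{|\lambda|}\Bigr), \]
and writing $\lambda-T_g=(\lambda-T_{g_k})\bigl(I-(\lambda-T_{g_k})^{-1}(T_g-T_{g_k})\bigr)$, one inverts the perturbation factor as soon as $\|T_g-T_{g_k}\|_{op}\cdot e^{2\|M_{g_k}\|/|\lambda|}<1$. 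This is the delicate point, handled by choosing the approximants from the canonical family of polynomial (hence multiplier) mollifications furnished by the summability kernels of the first step, which allow one to trade off the growth of $\|M_{g_k}\|$ against the error in the $T$-norm. Once arranged, $\lambda\in\rho(T_g)$ for every $\lambda\neq 0$, and the proof is complete.
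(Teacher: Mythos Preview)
Your treatment of the inclusion $\Mult(\ell^p_A(\omega))\subseteq T(\ell^p_A(\omega))$ is exactly the paper's: write the Volterra (or Ces\`aro) matrix as the Schur product of the Fej\'er matrix with the multiplication matrix and invoke the Schur-multiplier result for lower triangular operators. (The paper passes through an isometry $U:\ell^p_A\to\ell^p_A(\omega)$ to reduce to the unweighted case, but since conjugation by a diagonal operator commutes with Schur multiplication this is cosmetic.)

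For $g\in\Mult(\ell^p_A(\omega))$ your argument is \emph{different} from the paper's and, in fact, more direct. The identity
\[
n!\,T_g^{\,n}=\sum_{j=1}^n\binom{n}{j}(-1)^{j-1}M_{g^{\,n-j}}\,T_{g^{\,j}}
\]
is correct (it drops straight out of the kernel formula after expanding $(g(z)-g(t))^{n-1}$ and using $\int_0^z g^{j}fg'=(j+1)^{-1}T_{g^{j+1}}f$), and together with $\|T_{g^{j}}\|\le C_\Lambda\|M_g\|^{j}$ it yields the explicit decay $\|T_g^{\,n}\|\le C_\Lambda(2\|M_g\|)^n/n!$. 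The paper does not go this route: it uses the Limani--Malman resolvent formula
\[
R_g(\lambda)f=M_{e^{h/\lambda}}\,R_{g-h}(\lambda)\,M_{e^{-h/\lambda}}f+M_{e^{h/\lambda}}\,R_{g-h}(\lambda)\,T_{e^{-h/\lambda}}f.
\]
Your identity is a pleasant alternative on the multiplier algebra itself, and it gives quantitative information the paper does not record.

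The genuine gap is in your passage to the closure. Your resolvent bound
\[
\|(\lambda-T_{g_k})^{-1}\|\;\le\;\frac{C_\Lambda}{|\lambda|}\exp\!\bigl(2\|M_{g_k}\|/|\lambda|\bigr)
\]
depends exponentially on $\|M_{g_k}\|$, and the hypothesis gives you absolutely no control on that quantity: you only know $\|T_g-T_{g_k}\|\to0$, and along such a sequence $\|M_{g_k}\|$ may blow up (indeed $g$ need not be a multiplier at all). The sentence about ``polynomial mollifications furnished by the summability kernels'' does not rescue this. Fej\'er-type means $\sigma_N g$ of a mere $T$-symbol $g$ come with no useful multiplier-norm bound, and there is no reason for $T_{\sigma_N g}\to T_g$ in operator norm unless $g$ already lies in the $T$-closure of the polynomials, which is a strictly stronger assumption than lying in the $T$-closure of the multipliers.

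The paper sidesteps exactly this obstruction. Given $\lambda\neq0$ it picks a single $h\in\Mult$ with $\|T_{g-h}\|<|\lambda|$, so that $R_{g-h}(\lambda)$ is bounded by a plain Neumann series, and then uses the displayed formula above. The point is that the only resolvent appearing is $R_{g-h}(\lambda)$, controlled by the \emph{$T$-distance} from $g$ to $h$; the multiplier norm of $h$ enters only qualitatively, through $e^{\pm h/\lambda}\in\Mult$ and (via the first part of the theorem) $T_{e^{-h/\lambda}}$ bounded. No trade-off is needed. If you want to keep your power-estimate for multipliers, the cleanest fix is to abandon the perturbation of $(\lambda-T_{g_k})$ and instead plug $h=g_k$ into the paper's resolvent identity.
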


The surprising fact is that one does not have to know anything about the multiplier algebra $\Mult(\ell^p_A(\omega))$ and yet can conclude that the generalized Volterra operator is bounded and it has trivial spectrum. 
It is worth also mentioning that in this generality this theorem is new also in the Hilbert space case $p=2$. We will try to illustrate this with an example.

\begin{exa}
Consider the Dirichlet space $\cD$, the space of analytic functions $f\in \bD$ such that 
\[ \int_{\bD}|f'(z)|^2dA(z) < + \infty, \]
where $dA$ is the normalized Lebesgue area measure in the unit disc. This is a Hilbert space of analytic functions equipped with the norm 
\[ \norm{f}^2_\cD:= \sum_{n=0}^\infty(n+1)|\hat{f}(n)|^2. \]
It is known that the multipliers of the Dirichlet space have a description in terms of {\it Carleson measures} for $\cD$, that is positive Borel measures $\mu$ in the unit disc such that $\cD \subseteq L^2(d\mu, \bD).$ Let $\cX:=\{g\in H(\bD): |g'|^2dA \,\, \text{is a Carleson measure} \}$. Then \cite{Stegenga1980} $\Mult(\cD)=H^\infty \cap \cX$. It is also easy to check that $T(\cD)=\cX$ which of course contains $ \Mult(\cD)$ as it is to be expected by Theorem \ref{main:thm}. Furthermore Theorem \ref{main:thm} tells us that if $g$ is in the closure of $ H^\infty \cap \cX$ in $\cX$,  $T_g$ is quasi--nilpotent in the Dirichlet space.
\end{exa}

The next example shows that combining some known results with our main theorem we can give some non trivial examples of quasi--nilpotent generalized Volterra operators on $\ell^p_A. $

\begin{exa}
Let $r>1$ and $0\leq \alpha < \frac{\pi}{2}$ and consider the region 
\[ \Omega_{r,\alpha}:= \{z : |z|<r \} \setminus \{ z: \arg(z-1)\leq \alpha \}. \]
In \cite{Vinogradov1974}  Vinogradov showed that if a function $g$ is analytic and bounded on $\Omega_{r,\alpha}$ then it is a multiplier for $\ell^p_A.$ For example let $B$ a Blaschke product with all its zeros belonging in $[0,1)$, then by Vinogradov's Theorem and Theorem \ref{main:thm} the operator $T_B$ is bounded and quasi-nilpotent on all $\ell^p_A, 1<p<+\infty.$
\end{exa}


\section{Schur multipliers for lower triangular matrices}

In this section we will discuss the ideas that are behind the proof of Theorem \ref{main:thm}. Temporarily we can forget about analytic functions and work with the Banach spaces $\ell^p$. Since $\ell^p$ is a sequence space it is convenient to work with the matrix representation of operators with respect to the standard {\it unconditional basis}  $e_n := \{ \delta_{kn}\} _{k}  $.  With respect to this basis a bounded linear operator $A\in \cB(\ell^p)$ has a representation 
 \[   [A]= ( \inner{Ae_k}{e_n} )_{0\leq k ,n } \]
as an infinite matrix, where the pairing $\inner{\cdot}{\cdot}$ is the standard $\ell^p-$ pairing. Usually we denote by $a_{kn}$ the entries of the matrix representation. From now on we shall make little distinction between a bounded linear operator itself and its matrix representation.
A {\it Schur multiplier} is an infinite matrix $S=(\sigma_{kn})_{kn}$ such that \[ S\odot A := ( \sigma_{kn} a_{kn} )_{kn} \in \cB(\ell^p), \, \forall A \in \cB(\ell^p). \]
The pointwise product $\odot$ is usually referred to as Schur or Hadamard multiplication. An application of the closed graph theorem shows, that we can naturally define a norm on the space of Schur multipliers 
\[ \norm{S}_{\cS_p}:= \sup \{ \norm{ S \odot A }_{\cB(\ell^p)} : \,\,\, {\norm{ A }_{\cB(\ell^p)} \leq 1 } \}. \]
With this norm and the $\odot$ multiplication the space of all Schur multipliers, denoted by $\cS_p$ is  a unital commutative Banach algebra with identity element the matrix $\mathbbm{1}:= (1)_{kn}$ . The study of this space, as an object with inherent interest, has been initiated in the seminal paper of Bennett \cite{Bennett1977}. Since then, there has been a growing interest in Schur multipliers (see for example \cite{Aleksandrov2002}, \cite{Blasco2019}, \cite{Davidson2007} ).

In particular it seems the case that quite a few problems in operator theory, but also in the study of spaces of analytic functions can be formulated in terms of Schur multipliers. We will return to this point later in connection to our main theorem. 

To see the connection between Schur multipliers and the discussion before let us write $C_g$ in ``matrix form"
\begin{align*}
    [C_g] = 
    \begin{pmatrix}
    \hat{g}(1)&0                  &     0            & \cdots \\
    \hat{g}(2)&\frac12 \hat{g}(1) &     0            & \cdots \\
    \hat{g}(3)&\frac23 \hat{g}(2) &\frac13 \hat{g}(1)& \cdots \\
    \vdots    & \vdots            &                  & \ddots
    \end{pmatrix}
    =
     \begin{pmatrix}
     1        &0                  &     0            & \cdots \\
     1        &\frac12            &     0            & \cdots \\
     1        &\frac23            &\frac13           & \cdots \\
    \vdots    & \vdots            &                  & \ddots
    \end{pmatrix}
    \odot
     \begin{pmatrix}
    \hat{g}(1)&0                  &     0            & \cdots \\
    \hat{g}(2)&        \hat{g}(1) &     0            & \cdots \\
    \hat{g}(3)&        \hat{g}(2) &        \hat{g}(1)& \cdots \\
    \vdots    &   \vdots          &                  & \ddots
    \end{pmatrix} 
    = \cF \odot [M_{S^*g}].
\end{align*}
Where  $S^*$ is the {\it backward shift operator}, $S^*g(z):=(g(z)-g(0))/z$ and we have used the notation $\cF:=(1-\frac{k}{n+1})_{0\leq k \leq n}$  \footnote{When writing $A=(a_{kn})_{0\leq k \leq n }.$ we mean that the elements above the main diagonal are zero. Similarly when writing $A=(a_{kn})_{k\neq n} $ we mean that the diagonal elements are zero.}. We shall call $\cF$ the {\it Fejer matrix} because the $n^{th}$ row is just the positive Fourier coefficients of the $n^{th}$ Fejer kernel.

Therefore, with regards to the first half of Theorem \ref{main:thm}, the fact that $T_g$ is bounded whenever $g\in \Mult(\ell^p_A) $, we would be done if we knew that $ \cF \in \cS_p.$ Unfortunately this is not the case.

\begin{prop}
Let $\mathbbm{1}_{L}= (1)_{k\leq n}$. For any $1<p<\infty$ and any $\lambda \in \bC$ we have that $\lambda \cdot \mathbbm{1}_L+ \cF \not\in \cS_p.$
\end{prop}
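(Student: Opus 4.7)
The plan is a translation-and-compactness argument that handles every $\lambda \neq 0$ by reduction to the classical fact $\mathbbm{1}_L \notin \cS_p$; the case $\lambda = 0$ is the genuine subtlety and needs a separate direct construction.

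First I would establish two auxiliary facts about $\cS_p$. \emph{Translation invariance}: if $U$ denotes the right shift $Ue_k = e_{k+1}$ on $\ell^p$ (an isometry), then for any $M \in \cS_p$ and $N \geq 0$ the shifted matrix $M^{(N)} := (m_{n+N,\,k+N})_{n,k\geq 0}$ belongs to $\cS_p$ with $\|M^{(N)}\|_{\cS_p} \leq \|M\|_{\cS_p}$; this follows from the identity $M^{(N)} \odot \bigl((U^*)^N A U^N\bigr) = (U^*)^N (M \odot A) U^N$ together with $\|U\|=\|U^*\|=1$. \emph{Entry-wise compactness}: if $\{M_j\}\subset\cS_p$ satisfies $\sup_j \|M_j\|_{\cS_p} < \infty$ and $M_j \to M_\infty$ entry-wise, then $M_\infty \in \cS_p$ with $\|M_\infty\|_{\cS_p} \leq \liminf_j \|M_j\|_{\cS_p}$; indeed for every fixed $A \in \cB(\ell^p)$, the norm-bounded sequence $M_j \odot A$ converges in the weak operator topology to $M_\infty \odot A$, so the limit lies in the closed ball of the appropriate radius.

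Applied to $M := \lambda\mathbbm{1}_L + \cF$, a direct computation gives
\[
m_{n+N,\,k+N} = \lambda + 1 - \frac{k+N}{n+N+1} \xrightarrow[N\to\infty]{} \lambda, \qquad (0 \leq k \leq n),
\]
so $M^{(N)} \to \lambda\mathbbm{1}_L$ entry-wise. Hence $M \in \cS_p$ would force $\lambda\mathbbm{1}_L \in \cS_p$, which for $\lambda \neq 0$ means $\mathbbm{1}_L \in \cS_p$. This contradicts the classical Kwapie\'n--Pe\l czy\'nski theorem that the lower triangular projection is unbounded as a Schur multiplier on $\cB(\ell^p)$ for $1 < p < \infty$: the $N \times N$ discrete Hilbert matrix $H_{nk} = 1/(n-k)$ is uniformly bounded on $\ell^p$, while its lower-triangular truncation has operator norm $\gtrsim \log N$.

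The remaining case $\lambda = 0$ is the real obstacle, because the shift limit for $\cF$ alone is the zero matrix, making the compactness argument vacuous. Here one must produce a concrete $A \in \cB(\ell^p)$ for which $\cF \odot A \notin \cB(\ell^p)$. Using a Toeplitz test $A = [M_h]$ with $h \in \Mult(\ell^p_A)$ cannot succeed, since the identity $\cF \odot [M_h] = [C_{zh}]$ computed above, combined with Theorem \ref{main:thm}, would always return a bounded operator. The natural remedy is a non-Toeplitz construction: localize the Kwapie\'n--Pe\l czy\'nski counterexample into a sub-lattice on which $\cF$ is bounded below by a positive constant — for instance the strip $\{(n,k) : k \leq n/2\}$, where $\cF_{nk} \geq 1/2$ — so that the obstruction already known for $\mathbbm{1}_L$ is transferred to $\cF$. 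Pinning down the quantitative version of this localization is where the main technical work lies, and is the step I would expect to be the hardest part of the proof.
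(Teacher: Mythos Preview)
Your shift-and-compactness argument for $\lambda\neq 0$ is correct, but the proof as written has a genuine gap: the case $\lambda=0$ is only sketched, and you explicitly flag the localisation step as unfinished. So the proposal is not a complete proof.

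The paper organises the case split differently and thereby avoids the difficulty you create for yourself. Instead of shifting, it invokes the Bennett--Coine iterated-limits criterion: if $S=(\sigma_{kn})\in\cS_p$ and both iterated limits exist, they must coincide. For $\lambda\mathbbm{1}_L+\cF$ one computes $\lim_k\lim_n=\lambda+1$ and $\lim_n\lim_k=0$, which forces $\lambda=-1$ rather than $\lambda=0$. The sole surviving case is then $\cF-\mathbbm{1}_L=(-k/(n+1))_{0\le k\le n}$, and this is dispatched in one line by Schur-multiplying against the discrete Hilbert transform $\cH=(1/(n-k))_{k\neq n}$: the result decomposes as a Ces\`aro-dominated (hence bounded) piece minus the lower-triangular truncation of $\cH$, which is unbounded. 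No localisation or strip argument is needed.

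Note also that the same iterated-limits criterion would close \emph{your} gap instantly: for $\cF$ itself the iterated limits are $1$ and $0$, so $\cF\notin\cS_p$ follows immediately. The Toeplitz test you rightly discard and the strip localisation you propose are both unnecessary detours; the case you identify as ``the hardest part of the proof'' is in fact the easiest once the right tool is used. Conversely, your compactness lemma is a pleasant general fact, but here it lands you on the one value of $\lambda$ for which the obvious Hilbert-transform test \emph{fails} (indeed $\cF\odot\cH$ is bounded, being dominated by the Ces\`aro matrix), whereas the paper's reduction lands on the value where that test succeeds.
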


\begin{proof}
We shall use the following necessary condition for Schur multipliers which is due to Bennett \cite{Bennett1977} for $\cS_2$ and Coine \cite{Coine2018} for $\cS_p, p\neq 2$. Suppose that $S=(\sigma_{kn})$ is a Schur multiplier and that the iterated limits
\[ \lim_k\lim_n \sigma_{kn}= :\ell_1, \,\,\,\, \lim_n\lim_k \sigma_{kn} =: \ell_2 \]
exist. Then $\ell_1=\ell_2$.

Suppose now that $\lambda\cdot \mathbbm{1}_L + \cF$ belongs to $\cS_p$ for some $p\in(1,\infty)$ and some $\lambda \in \bC$. Since the iterated limits for this matrix are respectively $1+\lambda$ and $ 0 $ we conclude that $\lambda = -1.$ 
In this case the matrix is given by
\[ \cF-\mathbbm{1}_L = \Big( \frac{-k}{n+1} \Big)_{0\leq k \leq n}. \]
To see that this matrix is still not a Schur multiplier, consider the discrete Hilbert transform;
\[ \cH : = \big( \frac{1}{n-k} \big)_{k\neq n}. \]
One has,
\[ \big( \frac{-k}{n+1} \big)_{0\leq k \leq n} \odot \cH = \big(\frac{n-k+1}{n-k } \frac{1}{n+1}\big)_{0\leq k < n } - \big( \frac{1}{n-k} \big)_{0\leq k < n}. \]
On the right we have the sum of a bounded operator (dominated by twice  the Ces\'aro matrix) and an unbounded one (the lower truncation of the discrete Hilbert transform), which proves our point.
\end{proof}

 Therefore, if this approach is to be fruitful, one has to relax the requirement that $\cF$ is a Schur multiplier for bounded operators in $\ell^p$. In fact it would be enough to ask the Schur multiplier property for a subclass of matrices that contain the matrices $[M_{S^*g}],$ whenever they are bounded on $\ell^p.$ 

In the sequel we will try to develop a theory for Schur multipliers for lower triangular matrices. By lower triangular matrices we mean matrices of the form $A=(a_{kn})_{0\leq k \leq n}$, that is, matrices that vanish above the main diagonal. 

\begin{defn}
Let $S=(\sigma_{kn})_{0\leq k \leq n}$ a lower triangular matrix. We say that $S$ is a Schur multiplier for lower triangular matrices if for every $A\in\cB(\ell^p)$ which is lower triangular, $S\odot A \in \cB(\ell^p).$  The set of all such matrices $S$ we will denote it by $\cS^L_p.$
\end{defn}

This choice is motivated by several reasons. For one thing, Schur multipliers for lower triangular operators form a unital commutative Banach algebra, which we shall denote by $\cS_p^L$, with identity element the matrix $\mathbbm{1}_L$ and therefore the tools from the classical theory of Banach algebras are available. Moreover, a little less obvious is that  elements in $\cS_2^L$  are in fact Schur multipliers for Hankel operators on $\ell^2$ (see Section \ref{HankelMult}). 

More importantly we are able to prove that $\cS_p^L$ contains interesting elements that do not belong to $\cS_p$, among them, the Fejer matrix $\cF$. We have this more general theorem.

\begin{thm}\label{main}
     Let $\theta \in C(\bR)$ with support in $[-1,1]$. Denote by $\hat{\theta}$ the Fourier transform of $\theta$ and suppose that 
     \[ |\hat{\theta}(x)| \leq C (1+|x|)^{-a}, \]
     for some $a>1$.
     Then the matrix 
    \[ \Theta : = \big\{ \theta\big(\frac{k}{n+1}\big) \big\}_{0\leq k \leq n} \]
     is a Schur multiplier for lower triangular matrices on $\ell^p, 1<p<\infty$.
\end{thm}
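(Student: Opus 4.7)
The plan is to exploit the Fourier-analytic hypothesis on $\theta$ by representing $\Theta$ as a superposition of elementary modulation matrices $E^\xi$, and then to bound the Schur-multiplier norm of each modulation as a function of $|\xi|$.

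Since $|\hat\theta(\xi)|\le C(1+|\xi|)^{-a}$ with $a>1$, we have $\hat\theta\in L^1(\mathbb R)$, so Fourier inversion gives $\theta(y)=\int_\mathbb R\hat\theta(\xi)e^{2\pi i\xi y}\,d\xi$. Specializing to $y=k/(n+1)\in[0,1)$ and commuting the integral with the Schur product,
\[
\Theta\odot A\;=\;\int_\mathbb R\hat\theta(\xi)\,(E^\xi\odot A)\,d\xi,\qquad E^\xi_{kn}\;:=\;e^{2\pi i\xi k/(n+1)}\mathbbm{1}_{k\le n}.
\]
By the triangle inequality in operator norm, the theorem will follow once one establishes a Schur-multiplier bound of the form $\|E^\xi\|_{\cS^L_p}\le C(1+|\xi|)^b$ for some $b<a-1$, since then $\int(1+|\xi|)^{b-a}d\xi<\infty$.

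To obtain such a bound, I would first try the telescoping identity
\[
E^\xi\;=\;\mathbbm{1}_L+2\pi i\,\xi\int_0^1 e^{2\pi i\xi s}\,J^s\,ds,\qquad J^s_{kn}\;=\;\mathbbm{1}_{s(n+1)<k\le n},
\]
which reduces the question to a uniform Schur-multiplier bound on the sub-triangular indicators $J^s$ for $s\in(0,1)$. A uniform bound $\|J^s\|_{\cS^L_p}\le M$ yields $\|E^\xi\|_{\cS^L_p}\le 1+2\pi M|\xi|$, which alone suffices under the stronger hypothesis $a>2$. To reach the optimal range $a>1$, I would complement this with a dyadic decomposition of the column index into blocks $I_\ell=\{n:2^\ell\le n+1<2^{\ell+1}\}$, on which $1/(n+1)\approx 2^{-\ell}$. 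On each block the character $k\mapsto e^{2\pi i\xi k/(n+1)}$ is well approximated by the $n$-independent modulation $k\mapsto e^{2\pi i\xi k\cdot 2^{-\ell}}$, which is a diagonal isometry of $\ell^p$, while the mismatch is an oscillatory correction whose $\xi$-dependence can be improved via a second integration by parts in the auxiliary parameter $s$.

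The main obstacle is the uniform control of $\|J^s\|_{\cS^L_p}$ together with an honest $|\xi|$-dependence in the correction term. Geometrically $J^s\odot A$ truncates $A$ along the dilated diagonal $k=s(n+1)$, which is reminiscent of the (failing) triangular projection on $\cB(\ell^p)$; what saves us is that $A$ is already lower triangular, so the truncation is a milder "band" operation inside the lower triangular subalgebra. I would attempt to factor $J^s\odot A$ as a composition of an $\ell^p$-bounded projection and a dilation-type operator sandwiching $A$, with the uniformity in $s$ traced back to the $\ell^p$-boundedness of the Cesàro operator (Hardy's inequality) recalled in the introduction; the delicate point is that unlike the Toeplitz/Fourier-multiplier setting, the ratio $k/(n+1)$ is scale-invariant rather than translation-invariant in $(k,n)$, so a clean reduction to classical one-variable harmonic analysis requires a Mellin-type reformulation whose discretization to integer indices $k,n$ must be handled with care.
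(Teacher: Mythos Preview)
Your approach is genuinely different from the paper's, but it has a real gap: the whole argument rests on a uniform bound $\|J^s\|_{\cS^L_p}\le M$ for the sub-triangular indicators $J^s_{kn}=\mathbbm{1}_{s(n+1)<k\le n}$, which you do not prove and which is at least as hard as the theorem itself. Indeed, integrating $\mathbbm{1}_L-J^s$ over $s\in(0,1)$ recovers (up to a harmless correction) the Fej\'er matrix $\cF=(1-\tfrac{k}{n+1})_{0\le k\le n}$, so a uniform bound on $J^s$ would already imply $\cF\in\cS^L_p$, i.e.\ the case $\theta(x)=\max\{0,1-|x|\}$ of the theorem. In effect you are trying to deduce the result for smooth $\theta$ from a sharper statement about the discontinuous indicator $\mathbbm{1}_{[s,1]}$, with no independent argument for the latter; your remark that ``what saves us is that $A$ is already lower triangular'' is exactly the heart of the matter, and it is not cashed out. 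The further sketch for reaching the range $1<a\le 2$ via dyadic blocks and a ``Mellin-type reformulation'' is too vague to assess, and still leaves an $n$-dependent phase correction to be controlled.

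For comparison, the paper avoids indicator multipliers entirely. Rather than decomposing $\theta$ on the Fourier side, it writes the $n$-th entry of $(\Theta\odot A)x$ as $\int_{-\pi}^{\pi}k_n^\theta(t)\sum_{k=0}^n a_{nk}x_k e^{ikt}\,\tfrac{dt}{2\pi}$, where $k_n^\theta(t)=\sum_{|k|\le n}\theta(\tfrac{k}{n+1})e^{ikt}$ is the associated summability kernel. The hypothesis on $\hat\theta$ is used only, via Poisson summation, to obtain a uniform $L^1$ bound on $k_n^\theta$ together with the pointwise estimate $|k_n^\theta(t)|\le C\min\{n+1,(n+1)^{1-a}|t|^{-a}\}$. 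Lower-triangularity of $A$ enters at a single elementary step: applying $A$ to the truncated modulated sequence $y_k=x_k(e^{ikt}-1)\mathbbm{1}_{k\le n}$ yields $\sum_{\lambda\le n}\big|\sum_{k\le\lambda}a_{\lambda k}x_k(e^{ikt}-1)\big|^p\le\|A\|^p\sum_{k\le n}|x_k|^p|e^{ikt}-1|^p$. Combined with H\"older's inequality against $|k_n^\theta|$ and an Abel summation by parts in $n$, this closes the estimate for the full range $a>1$ without any auxiliary Schur multipliers.
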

The proof of the above theorem, although requires a long calculation, uses only elementary techniques. In fact if one considers the  kernel $k_n^\theta$ corresponding to the generating function $\theta$
\[k_n^\theta(t) := \sum_{|k| \leq n} \theta\big(\frac{k}{n+1} \big) e^{ik t},  \]
the theorem follows by  elementary manipulations and some simple pointwise estimates of these kernels. 

There is a variety of examples of functions $\theta$ satisfying the hypothesis of the theorem. A comprehensive list for example can be found in \cite[Chapter 2.11]{Weisz2017}, but some particular kernels including the so called Riesz kernels are of special interest.

\begin{cor}\label{examples:Schur}
    Let $ \gamma > 0, p > 1$, we define the matrix
    \[ \cF_{\gamma} := \Big( \big(  1- \frac{k}{n+1} \big)^\gamma  \Big)_{0\leq k \leq n}.  \]
    Then $\cF_{\gamma}$ belongs to $\cS_p^L$. Moreover, there exists a constant $C=C(p)>0$ such that $\norm{\cF_N}_{\cS_p^L} \leq C N^2 $ for all $N=1,2, \dots$
\end{cor}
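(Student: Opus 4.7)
The plan is to apply Theorem~\ref{main} with the specific choice $\theta_\gamma(t) := (1 - |t|)_+^\gamma$, $\gamma > 0$. Since $\theta_\gamma(k/(n+1)) = (1 - k/(n+1))^\gamma$ for all $0 \le k \le n$, the matrix $\cF_\gamma$ is precisely the matrix $\Theta$ in Theorem~\ref{main} associated to $\theta_\gamma$, so it suffices to verify the hypotheses. It is immediate that $\theta_\gamma \in C(\bR)$ (continuous on $\bR$ and vanishing at $\pm 1$ since $\gamma > 0$) with support in $[-1,1]$.

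What remains is to check the Fourier decay condition $|\hat{\theta_\gamma}(\xi)| \le C(1+|\xi|)^{-a}$ for some $a > 1$. First I would write, via the change of variable $u = 1-t$,
\[
\hat{\theta_\gamma}(\xi) \;=\; 2 \operatorname{Re}\Big[\, e^{i\xi} \int_0^1 u^\gamma e^{-i\xi u}\, du \,\Big],
\]
and integrate by parts once against $e^{-i\xi u}$ (the boundary term at $u=0$ vanishes because $\gamma > 0$). The endpoint contribution at $u = 1$, after multiplication by $e^{i\xi}$, produces a purely imaginary term that drops out upon taking the real part, while the remaining integral can be treated by the classical asymptotic expansion of the incomplete gamma integral $\int_0^1 s^{\gamma-1} e^{-i\xi s}\,ds$, yielding a contribution of order $|\xi|^{-\gamma-1}$. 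Combined with the trivial bound $|\hat{\theta_\gamma}(\xi)| \le \|\theta_\gamma\|_{L^1(\bR)} = 2/(\gamma+1)$ valid on $|\xi| \le 1$, this gives $|\hat{\theta_\gamma}(\xi)| \le C_\gamma (1+|\xi|)^{-a}$ with $a = \min(\gamma+1, 2) > 1$, so Theorem~\ref{main} applies and delivers $\cF_\gamma \in \cS_p^L$ for every $1 < p < \infty$.

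For the quantitative bound in the integer case $\gamma = N$, the plan is to iterate the integration by parts $N$ times, using that $(1-t)^N$ vanishes to order $N$ at $t = 1$, so as to obtain an exact closed-form expansion of $\hat{\theta_N}(\xi)$ as a sum of $N$ rational terms with coefficients of the form $N!/(N-j)!$. A careful split of the parameter range --- the $L^1$-bound $|\hat{\theta_N}| \le 2/(N+1)$ on $|\xi| \lesssim N$, together with the dominant asymptotic $\sim N/\xi^2$ on $|\xi| \gtrsim N$ --- should produce a uniform estimate of the form $|\hat{\theta_N}(\xi)| \le C\, N (1+|\xi|)^{-2}$; reading off the explicit $\theta$-dependence of the constant in Theorem~\ref{main} should then combine with this to give the claimed $\|\cF_N\|_{\cS_p^L} \le C(p) N^2$. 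The hard part will be this last quantitative step: the Banach-algebra inequality $\|\cF^{\odot N}\|_{\cS_p^L} \le \|\cF\|_{\cS_p^L}^N$ only produces exponential growth, so to descend to polynomial growth one must extract the bound from a careful reading of the pointwise kernel estimates inside the proof of Theorem~\ref{main}, rather than from the abstract algebra structure of $\cS_p^L$.
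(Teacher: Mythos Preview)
Your approach is essentially the same as the paper's: apply Theorem~\ref{main} with $\theta_\gamma(t)=(1-|t|)_+^\gamma$ and verify the Fourier-decay hypothesis, then trace the constants through Lemmas~\ref{continuous:discrete} and~\ref{Theorem:quantitative} for the quantitative bound. The only differences are cosmetic: the paper quotes the decay estimate from Lemma~\ref{Brown lem} rather than deriving it, and for the quantitative constant it integrates by parts \emph{twice} (not $N$ times) to obtain $c(N)\le CN^2$---in fact your sharper intermediate estimate $|\hat\theta_N(\xi)|\le CN(1+|\xi|)^{-2}$, once fed through Lemma~\ref{Theorem:quantitative}, would already give $\|\cF_N\|_{\cS_p^L}\le C(p)N$.
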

 The quantitative estimate will be important for our last application of the theorem and it is obtained by a careful examination of the constants involved (see Section \ref{proofofmain}).

\subsection{Extensibility  of Schur multipliers for lower triangular operators.}

In this short section we would like to draw attention to a problem connected to multipliers in $\cS^L_p$ that we think is very interesting. Suppose that a lower triangular matrix $S$ has the following property; there exists a matrix $T\in \cS_p$ such that 
\[ \Pi(T):= T \odot \mathbbm{1}_L = S. \]
If this happens we say that $S$ {\it extends to a Schur multiplier in } $\cS_p$. Of course if $S$ is such it is also a Schur multiplier for lower triangular matrices on $\ell^p$, because for $A\in \cB(\ell^p)$, lower triangular $ T\odot A = S \odot A \in \cB(\ell^p). $
The converse is not at all clear. 

\begin{prob} Is it true that every element in $\cS_p^L, 1<p<\infty$ can be extended to a Schur multiplier in $\cS_p$ ?
\end{prob}

At least when $p=2$, a necessary condition that a Schur multiplier for lower triangular operators has to satisfy in order to be extensible is to be {\it completely bounded} as an operator on the space of bounded lower triangular operators. That is because every Schur multiplier in $\ell^2$ is completely bounded \cite[Theorem 5.1]{Pisier2001}.

In the particular case that $a:=\{a_k \}_{k\geq 0}$ and $p=2$ is a sequence of complex numbers and $T_a$ is the corresponding (lower triangular) Toeplitz matrix $T_\alpha : = \{ \alpha_{n-k} \}_{0\leq k \leq n}$, it can be seen without difficulty that this is indeed the case.

\begin{prop}
For $a$ and $T_a$ as before, $T_a \in \cS_2^L$ if and only if 
\[ \tau(z) : = \sum_{k\geq 0} a_k z^k \] is a Cauchy transform of a finite (complex) Borel meausure on the unit circle. 
\end{prop}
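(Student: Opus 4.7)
The plan is to prove the two implications separately. For the forward direction, assume $\tau$ is the Cauchy transform of a measure $\mu \in M(\partial \bD)$, so that the Taylor coefficients $a_k$ can be read as appropriate moments of $\mu$. For every unimodular $\zeta$, the diagonal operator $D_\zeta:=\diag(1,\zeta,\zeta^2,\dots)$ is a unitary of $\ell^2$, and a direct calculation shows that the conjugation $A\mapsto D_\zeta A D_{\bar\zeta}$ twists the $d$-th (sub)diagonal of $A$ by $\zeta^d$. Integrating this identity against $\mu$ yields the Bochner-integral representation
\[
T_a \odot A \;=\; \int_{\partial\bD} D_\zeta A D_{\bar\zeta}\, d\mu(\zeta)
\]
in $\cB(\ell^2)$, valid for every lower triangular $A \in \cB(\ell^2)$; and since $\|D_\zeta A D_{\bar\zeta}\|=\|A\|$, we deduce $\|T_a\odot A\|_{\cB(\ell^2)} \le \|\mu\|_{M(\partial\bD)}\,\|A\|$, so that $T_a \in \cS_2^L$. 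The same calculation in fact shows $T_a \in \cS_2$, so the extensibility problem of the preceding subsection is solved affirmatively in the Toeplitz case.

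For the converse, assume $T_a \in \cS_2^L$ with multiplier norm $C$. A preliminary test against the rank-one lower triangular operators $e_0\otimes v$, $v\in\ell^2$, yields $(a_k)\in\ell^\infty$, so $\tau$ is analytic in $\bD$. The heart of the argument is to test the Schur multiplier against the analytic Toeplitz operator $T_f$ on $\ell^2 \cong H^2$, for $f$ an analytic polynomial. This $T_f$ is a lower triangular Toeplitz matrix with generating sequence $(\hat f(k))_{k\ge 0}$ and operator norm $\|T_f\|_{\cB(H^2)} = \|f\|_{H^\infty}$ (the classical formula for analytic Toeplitz operators). The Schur product $T_a\odot T_f$ is again a lower triangular Toeplitz operator, now with symbol $\tau_f(z) := \sum_{j\ge 0} a_j\hat f(j)\, z^j$, which is itself a polynomial. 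Applying the classical norm formula once more, together with the multiplier hypothesis, gives
\[
\|\tau_f\|_{H^\infty} \;=\; \|T_a\odot T_f\|_{\cB(\ell^2)} \;\le\; C\,\|f\|_{H^\infty},
\]
and evaluating $\tau_f$ at $z=1$ produces $\bigl|\sum_j a_j\hat f(j)\bigr|\le C\,\|f\|_{H^\infty}$. Hence the linear functional $\Lambda(f) := \sum_j a_j \hat f(j)$ is bounded in the supremum norm on analytic polynomials and extends to the disc algebra $A(\bD)$.

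Hahn--Banach together with the Riesz representation theorem then produces a measure $\nu \in M(\partial\bD)$ that represents $\Lambda$ on $A(\bD)$, in particular satisfying $\int \zeta^n\, d\nu(\zeta) = a_n$ for all $n\ge 0$; after the reflection $\zeta\mapsto\bar\zeta$ needed to align with the normalization of the Cauchy transform used in the forward direction, this identifies $\tau$ as the Cauchy transform of a finite Borel measure on $\partial\bD$. The only slightly delicate step is the identification of $T_a\odot T_f$ with the analytic Toeplitz operator of the Hadamard-product symbol $\tau_f$; once this is in place, the whole converse reduces to the classical duality $A(\bD)^* \hookrightarrow M(\partial\bD)$.
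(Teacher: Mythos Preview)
Your proof is correct and follows essentially the same route as the paper's. For the forward direction the paper simply invokes Bennett's theorem that Toeplitz matrices built from Fourier coefficients of a finite measure lie in $\cS_2$, while you spell out the underlying averaging formula $T_a\odot A=\int D_\zeta A D_{\bar\zeta}\,d\mu$; for the converse both arguments test $T_a$ against analytic Toeplitz operators and use $\|T_h\|_{\cB(\ell^2)}=\|h\|_{H^\infty}$, the paper then citing the known characterization of Hadamard self-multipliers of $H^\infty$ as Cauchy transforms, whereas you reprove that step directly via evaluation at $z=1$ and Hahn--Banach/Riesz on the disc algebra.
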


If $\tau$ is  is as above, that is, there exists a finite Borel measure $\mu$ such that $a_k$ are the positive Fourier coefficients of $\mu$, then by \cite[Theorem 8.1]{Bennett1977} $T_\mu:=\{\hat{\mu}(n-k)\}_{kn}$ is in $\cS_2$, and it extends $T_a.$
\begin{proof}
One direction is clear. Suppose now that $T_a \in S^L_2$. Let us denote by $*$ the coefficient wise multiplication of two power series. Then for a bounded holomorphic function $h$ we have
\[ \norm{T_a \odot T_h}_{\ell^2} = \norm{T_{\tau * h}}_{\ell^2} = \norm{\tau * h}_{H^\infty} < +\infty.  \]
For the second inequality we have used \cite[Proposition 10.1]{zhu_operator_2007} In other words $\tau$ is a coefficient self-multiplier for $H^\infty$, and this is equivalent \cite[Theorem 10.1.2]{Jevti2016} to being a Cauchy transform of a finite Borel measure.
\end{proof}

\subsection*{Notation} We will use the letter $C$ to denote a general positive constant that depends on some parameters and might change from appearance to appearance. When we want to stress the dependence of $C$ on some parameters $\alpha, \beta, \gamma, \dots$ we write $C=C(\alpha, \beta,\gamma, \dots).$





\section{Proof of the main theorems}\label{proofofmain}

The proof of the the following elementary lemma can be found scattered around the literature. We provide a short proof of it for the sole purpose of completeness.

\begin{lem}\label{continuous:discrete}
Let $\theta \in  C(\bR) $ with support contained in $[-1,1]$, and 
\[ |\hat{\theta}(x)| \leq (1+|x|)^{-a}, \]
for some $a>1$. Then the kernel 
 \[k_{n}^\theta (t) : = \sum_{|k|\leq n}\theta \big( \frac{k}{n+1} \big) e^{ikt} \] 
 satisfies
\begin{enumerate}
    \item $ \norm{k_n^\theta}_{L^1(\mathbb{T})} \leq  \norm{\hat{\theta}}_{L^1(\bR)}. $
    \item $|k_n^{\theta}(x)| \leq C \min\{n+1, (n+1)^{-(a-1)} |x|^{-a} \}, \quad |x|<\pi,$
\end{enumerate} for some positive constant $C=C(\norm{\theta}_{L^\infty(\bR)}, \norm{\hat{\theta}(x)|x|^\alpha}_{L^\infty(\bR)}, a).$
\end{lem}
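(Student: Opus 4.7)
My plan is to obtain a single closed-form expression for $k_n^\theta$ via the Poisson summation formula and read off both estimates from it. First, since $\theta \in C(\mathbb{R})$ has support in $[-1,1]$, continuity forces $\theta(\pm 1)=0$, so the truncation $|k|\leq n$ is immaterial: $\theta(k/(n+1))=0$ whenever $|k|\geq n+1$, hence
\[ k_n^\theta(t) = \sum_{k\in\mathbb{Z}}\theta\bigl(k/(n+1)\bigr)e^{ikt}. \]
Inserting Fourier inversion $\theta(y)=\frac{1}{2\pi}\int_{\mathbb{R}}\hat\theta(\xi)e^{iy\xi}d\xi$ and interchanging (legitimate, since $a>1$ makes $\hat\theta\in L^1$), then recognizing $\sum_{k\in\mathbb{Z}}e^{iks}=2\pi\sum_{j\in\mathbb{Z}}\delta(s-2\pi j)$, produces the key identity
\[ k_n^\theta(t) = (n+1)\sum_{j\in\mathbb{Z}}\hat\theta\bigl((n+1)(t-2\pi j)\bigr). \]

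For part (1), I would integrate in $t\in[-\pi,\pi]$, swap the sum with the integral, and substitute $u=(n+1)(t-2\pi j)$ in each summand. As $j$ ranges over $\mathbb{Z}$, the resulting intervals $[(n+1)(-\pi-2\pi j),(n+1)(\pi-2\pi j)]$ tile $\mathbb{R}$, so the total collapses cleanly to $\int_{\mathbb{R}}|\hat\theta(u)|\,du=\|\hat\theta\|_{L^1(\mathbb{R})}$.

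For part (2), the trivial bound $|k_n^\theta(t)|\leq(2n+1)\|\theta\|_\infty$ already delivers the linear-in-$n$ estimate. For the decaying estimate, fix $|t|<\pi$ and apply $|\hat\theta(x)|\leq(1+|x|)^{-a}$ to the Poisson representation. The $j=0$ term is bounded by $(n+1)(1+(n+1)|t|)^{-a}$, which is at most $(n+1)^{1-a}|t|^{-a}$ once $(n+1)|t|\geq 1$. For $j\neq 0$ and $|t|<\pi$ one has $|2\pi j-t|\geq \pi|j|$, whence the tail is bounded by
\[ (n+1)\sum_{j\neq 0}\bigl(1+\pi(n+1)|j|\bigr)^{-a} \leq C(n+1)^{1-a}, \]
because $a>1$ makes the series converge and the dominant summand contributes $(n+1)^{-a}$. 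Absorbing this into the main term on $|t|<\pi$ via $(n+1)^{1-a}\leq \pi^a(n+1)^{1-a}|t|^{-a}$ completes the estimate.

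The computations themselves are mild; the one subtlety deserving attention is justifying the Fourier/Poisson interchange (covered by $\hat\theta\in L^1$ from $a>1$) and observing that the hypothesis "continuous with support in $[-1,1]$" is exactly what lets one costlessly extend the truncated sum to all of $\mathbb{Z}$, which is what allows a single Poisson identity to drive both conclusions.
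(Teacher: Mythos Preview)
Your argument is correct and follows essentially the same route as the paper's proof: both derive the Poisson summation identity $k_n^\theta(t)=(n+1)\sum_{j\in\mathbb{Z}}\hat\theta\bigl((n+1)(t+2\pi j)\bigr)$, deduce the $L^1$ bound by tiling $\mathbb{R}$, take the trivial bound $(2n+1)\|\theta\|_\infty$ for the linear part of (2), and sum the decay of $\hat\theta$ over translates for the $|x|^{-a}$ part. Your write-up is in fact slightly more explicit than the paper's in two places --- you spell out why continuity plus support in $[-1,1]$ lets the truncated sum extend to all of $\mathbb{Z}$, and you separate the $j=0$ term from the tail --- but the method is the same.
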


\begin{proof} We shall use the following normalization for the Fourier transform
\[ \hat{f}(\xi) = \frac{1}{2\pi} \int_{\bR} f(x) e^{-i\xi x} dx. \] 
For $\lambda > 0$ let also $\delta_\lambda(f)(x) = f(\lambda x)$, the dilation operator. 

Consider now the corresponding ``continuous version" of the kernel $k_n^\theta$; 
\begin{align*}{} K^\theta_T(\xi):  & =\frac{1}{2\pi} \int_{-T}^T\theta\big( \frac{x}{T}\big)e^{-ix\xi}dx = \widehat{\delta_{1/T}(\theta)}(\xi) = T \hat{\theta}(T\xi). \end{align*}

By the Poisson summation formula \cite[Theorem 3.2.8 ]{Grafakos2009} we have that 
\[ k^\theta_{n}(x) = \sum_{k\in \bZ} \delta_{1/(n+1)}(\theta)(k)e^{ikx}= \sum_{k\in \bZ} \widehat{\delta_{1/(n+1)}(\theta)}(x+2k\pi) =  \sum_{k \in \bZ} K^\theta_{n+1} (x+2k\pi). \]
Hence, 
\[ \norm{k^\theta_{n}}_{L^1(\mathbb{T})} \leq \sum_{k\in \bZ} \int_{-\pi}^\pi |  K_{n+1}^\theta(x+2k\pi) | dx = \int_{\bR} |K_{n+1}^\theta(x)|dx = \int_{\bR}|\hat{\theta}(x)|dx < + \infty.  \]
Also, by the definition of $k_n^\theta$, 
\[ |k_n^\theta(x)| \leq (2n+1) \norm{\theta}_{L^\infty(\bR)}. \]
Finally for $n\geq 1$,
\begin{align*} |k_{n-1}^\theta(x)| & \leq n \sum_{k\in\mathbb{Z}} |\hat{\theta}(nx+2kn\pi )| \\
& \leq C n\sum_{k\in\mathbb{Z}}\frac{1}{|xn+2k\pi  n|^{a}} \\
& \leq  C \frac{a}{a-1}\frac{1}{|x|^{a}n^{a-1}},\quad |x| < \pi.
\end{align*}

\end{proof}

Theorem \ref{main} will now follow from the following result. 

\begin{lem}\label{Theorem:quantitative}
	 Let  $\{\varphi_n \}\subset L^1(\mathbb{T}) $ be a family of kernels which satisfy:
	\begin{enumerate}
	\item $\varphi_n$ is a trigonometric polynomial of degree $n$.
	    \item $\norm{\varphi_n}_{L^1} \leq \rho $,
	    \item $|\varphi_n (t)| \leq \rho \min \{ n+1, \frac{1}{(n+1)^at^{a+1}} \}, \quad a>0,  -\pi<t<\pi.$
	\end{enumerate}
	Then the matrix
	\[ \Phi:= \big\{ \hat{\varphi}_n(k)   \big\}_{k,n} \]  is a Schur multiplier for lower triangular matrices on $\ell^p$ for $p>1$.
	Furthermore, 
\[ \norm{\Phi}_{\cS_p^L} \leq C \rho, \,\,\, \text{ as } \rho \rightarrow \infty, \]
where $C=C(\alpha,p).$

\end{lem}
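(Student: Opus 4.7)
The plan is to express $\Phi\odot A$ as a Bochner-type integral using the Fourier representation of $\hat\varphi_n(k)$, and then reduce the Schur multiplier bound to a single integral inequality controlled by hypotheses (2) and (3).

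First, I would use $\hat\varphi_n(k)=\frac{1}{2\pi}\int_{-\pi}^{\pi}\varphi_n(t)e^{-ikt}dt$ and the diagonal isometry $(U_tx)_k:=e^{-ikt}x_k$ of $\ell^p$ to write, for any lower triangular $A\in\cB(\ell^p)$ and $x\in\ell^p$,
\[
((\Phi\odot A)x)_n=\frac{1}{2\pi}\int_{-\pi}^{\pi}\varphi_n(t)(AU_tx)_n\,dt,
\]
with the uniform control $\|AU_tx\|_p\leq \|A\|_{p\to p}\|x\|_p$ coming from $U_t$ being an isometry. Next, applying Jensen's inequality with the probability measure $|\varphi_n(t)|\,dt/\|\varphi_n\|_{L^1}$ gives
\[
|((\Phi\odot A)x)_n|^p \leq \frac{\|\varphi_n\|_{L^1}^{p-1}}{(2\pi)^p}\int|\varphi_n(t)||(AU_tx)_n|^p\,dt,
\]
and summing over $n$, invoking (2), and swapping the sum with the integral via Fubini reduces the claim to the estimate
\[
J:=\int_{-\pi}^{\pi}\sum_n|\varphi_n(t)||(AU_tx)_n|^p\,dt \;\leq\; C(a,p)\,\rho\,\|A\|_{p\to p}^p\|x\|_p^p.
\]

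To bound $J$, I would use the pointwise estimate (3). Setting $f_n(t):=(AU_tx)_n$, the crucial global fact is $\sum_n|f_n(t)|^p=\|AU_tx\|_p^p\leq(\|A\|\,\|x\|_p)^p$ for each $t$. Splitting the inner sum at the crossover $(n+1)|t|\sim 1$ in (3) yields
\[
\sum_n|\varphi_n(t)||f_n|^p \leq \rho\!\!\sum_{(n+1)|t|\leq 1}\!\!(n+1)|f_n|^p \;+\; \frac{\rho}{|t|^{a+1}}\!\!\sum_{(n+1)|t|>1}\!\!\frac{|f_n|^p}{(n+1)^a}.
\]
The plan is then to decompose $[-\pi,\pi]$ dyadically into annuli $I_j=\{|t|\sim 2^{-j}\}$: on $I_j$ the crossover happens at $n\sim 2^j$, and after integrating each of the two pieces of the splitting on $I_j$, one exploits that the $\ell^p$-mass of $(f_n(t))_n$ lying on the "relevant" range of $n$ is bounded by $(\|A\|\|x\|_p)^p$. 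The decay rate $a>0$ in hypothesis (3) is used to sum geometrically over the "tail" scales, and the $L^1$-normalisation (2) controls the "main" part, producing the linear dependence on $\rho$ in the final estimate.

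The main obstacle is precisely this last dyadic estimate: the naive pointwise bound $\sup_n|\varphi_n(t)|\sim 1/|t|$ is not integrable on $[-\pi,\pi]$, so one cannot simply combine the uniform $\ell^p$-bound on $f$ with a supremum over $n$. The resolution must exploit the fact that, on each dyadic annulus $I_j$, the weight $|\varphi_n(t)|$ is concentrated around $n\sim 2^j$, allowing the weighted sum over $n$ to be matched with the $\ell^p$-mass of $(f_n(t))_n$ at the appropriate scale. Careful bookkeeping of the constants in this matching — powered by the two-sided pointwise control in (3) — is the core technicality of the proof and produces the quantitative estimate $\|\Phi\|_{\cS_p^L}\leq C(a,p)\rho$ claimed in the lemma.
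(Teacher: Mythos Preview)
Your argument has a genuine gap: it never uses that $A$ is lower triangular. If the dyadic estimate you sketch actually went through using only the global bound $\sum_n|f_n(t)|^p\le(\|A\|\,\|x\|_p)^p$, it would prove that $\Phi\in\cS_p$, not merely $\Phi\in\cS_p^L$; but the paper shows (for the Fej\'er matrix $\cF$) that this is false. So the dyadic step must fail, and indeed it does: on $I_j=\{|t|\sim 2^{-j}\}$ the first piece of your split gives
\[
\rho\sum_{n< 2^j}(n+1)|f_n(t)|^p\ \le\ \rho\,2^{j}\,(\|A\|\,\|x\|_p)^p,
\]
and the second piece gives $\rho\,|t|^{-(a+1)}\sum_{n\ge 2^j}(n+1)^{-a}|f_n(t)|^p\le \rho\,2^{j(a+1)}\,2^{-ja}(\|A\|\,\|x\|_p)^p=\rho\,2^{j}(\|A\|\,\|x\|_p)^p$. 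Integrating over $I_j$ (measure $\sim 2^{-j}$) each contributes $\sim\rho(\|A\|\,\|x\|_p)^p$, and the sum over $j$ diverges logarithmically. The decay exponent $a$ cancels exactly, so there is no geometric gain.

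What is missing is precisely the mechanism by which the paper uses triangularity. The paper first subtracts the constant term, writing the inner sum as $\sum_{k\le n}a_{nk}x_k(e^{ikt}-1)$ plus $\sum_{k\le n}a_{nk}x_k$; the latter is immediately bounded by $\|A\|\,\|x\|_p$. For the former, the crucial observation is that because $A$ is lower triangular, the partial sums $S_n(t)=\sum_{\lambda\le n}|\sum_{k\le\lambda}a_{\lambda k}x_k(e^{ikt}-1)|^p$ satisfy
\[
S_n(t)\ \le\ \|A\|^p\sum_{k=0}^{n}|x_k|^p|e^{ikt}-1|^p,
\]
since one may truncate the input at index $n$ without changing the first $n$ rows of the output. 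The paper then performs Abel summation in $n$ against the weight coming from (3), and the factor $|e^{ikt}-1|^p\lesssim\min\{1,(k|t|)^p\}$ supplies exactly the extra decay needed to make the resulting double sum converge. Your outline captures the integral representation and the H\"older/Jensen step correctly, but the endgame requires this subtraction-plus-triangular-truncation idea rather than a scale-by-scale matching of $\ell^p$-mass.
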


\begin{proof}
Suppose now that $ x=(x_k)\in \ell^p $. By H\"older's inequality and the fact that $ \norm{\varphi_{n}}_{L^1}\leq \rho$ we have that,

\begin{align*}
\pnorm{\Phi \odot A (x)}^p= & \sum_{n=0}^{\infty} \left| \sum_{k=0}^{n}a_{nk} \hat{\varphi}_n(k) x_k \right|^p \\
= & \sum_{n=0}^{\infty} \left|\int_{-\pi}^{\pi}\varphi_{n}(t) \sum_{k=0}^{n}a_{nk}x_ke^{ikt}\frac{dt}{2\pi} \right|^p \\
\leq & \rho^{\frac{p}{q}} \sum_{n=0}^{\infty} \int_{-\pi}^{\pi}\left|\varphi_{n}(t)\right|\cdot \left| \sum_{k=0}^{n}a_{nk}x_ke^{ikt}\right|^p \frac{dt}{2\pi} \\
\leq &2^{p-1} \rho^{\frac{p}{q}} \sum_{n=0}^{\infty} \int_{-\pi}^{\pi}\left|\varphi_{n}(t)\right|\cdot \left| \sum_{k=0}^{n}a_{nk}x_k(e^{ikt}-1)\right|^p \frac{dt}{2\pi} \\
  & \,\,\,\,\,\,\,+  2^{p-1} \rho^{p} \sum_{n=0}^{\infty} \left| \sum_{k=0}^{n}a_{nk}x_k \right|^p \\
\leq & 2^{p-1}\rho^p\sum_{n=0}^{\infty}(n+1) \int_{|t|<\frac{1}{n+1}} \left| \sum_{k=0}^{n}a_{nk}x_k(e^{ikt}-1)\right|^p \frac{dt}{2\pi} \\
 & \,\,\,\,\,\, + 2^{p-1}\rho^p \sum_{n=0}^{\infty} \int_{\frac{1}{n+1}<|t|<\pi} \frac{1}{(n+1)^{a}|t|^{a+1}}\left| \sum_{k=0}^{n}a_{nk}x_k(e^{ikt}-1)\right|^p \frac{dt}{2\pi} \\
 & \,\,\,\,\,\,\, + 2^{p-1}\rho^{p} \norm{A}^p \pnorm{x}^p.
\end{align*}
Lets call the two main terms appearing above  (I) and (II) in order of appearance. In order to estimate these terms, for $ n\in \mathbb{N} $ we define \begin{equation}
S_n(t):= \sum_{\lambda=0}^n \left| \sum_{k=0}^{\lambda}a_{\lambda k}x_k(e^{ikt}-1)\right|^p.
\end{equation} Notice that if we define a sequence $ y_k=x_k(e^{ikt}-1) $ for $ 0\leq k \leq n $, $ y_k=0 $ otherwise we have that
\begin{equation}
S_n(t)=  \sum_{\lambda=0}^n \left| \sum_{k=0}^{\lambda}a_{\lambda k}y_k \right|^p \leq \pnorm{A(y)}^p \leq \norm{A}^p \sum_{k=0}^{n}|x_k|^p|e^{ikt}-1|^p.
\end{equation} (Notice that this is the only place where we use the assumption that $A$ is lower triangular.) With this estimate in hand we go back to estimate (I) and (II). Fix $ M>0 $ and by Abel's summation by parts we have
\begin{align*}
\text{I} & =   \int_{-\pi}^{\pi} \sum_{n=0}^{M}  (n+1)  \chi_{[|t|<\frac{1}{n+1}]}(t)  \left| \sum_{k=0}^{n}a_{nk}x_k(e^{ikt}-1)\right|^p \frac{dt}{2\pi}\\
 &=\int_{-\pi}^{\pi} \sum_{n=0}^{M}(n+1) \chi_{[|t|<\frac{1}{n+1}]}(t) \left(S_{n}(t)-S_{n-1}(t))\right)\frac{dt}{2\pi}\\
 &=\int_{-\pi}^{\pi} \sum_{n=0}^{M-1}\left((n+1) \chi_{[|t|<\frac{1}{n+1}]}(t)-(n+2) \chi_{[|t|<\frac{1}{n+2}]}(t)\right)S_{n}(t)\frac{dt}{2\pi}\\
 & \,\,\,\,\,\,\,\,\,\,\, +\int_{-\pi}^{\pi}(M+1)\chi_{[|t|<\frac{1}{M+1}]}(t)S_{M}(t)\frac{dt}{2\pi}\\
 & \leq  \sum_{n=0}^{M-1} (n+1)\int_{\frac{1}{n+2}<|t|<\frac{1}{n+1}}S_n(t)\frac{dt}{2\pi} + (M+1)\int_{-\frac{1}{M+1}}^{\frac{1}{M+1}}S_M(t)\frac{dt}{2\pi}
 \\ & \leq   \sum_{n=0}^{\infty} (n+1)\int_{\frac{1}{n+2}<|t|<\frac{1}{n+1}}S_n(t)\frac{dt}{2\pi} +  \frac{2^{p}}{\pi} \norm{A}^p\pnorm{x}^p \\
 &=:  \text{I}'  +  \frac{2^{p}}{\pi}\norm{A}^p\pnorm{x}^p.
\end{align*}
But,
\begin{align*}
\text{I}' \leq  & 2^p\norm{A}^p \sum_{n=0}^{\infty} (n+1) \int_{\frac{1}{n+2}}^{\frac{1}{n+1}} \sum_{k=0}^{n}|x_k|^p(kt)^p dt \\
&\leq 2^p\norm{A}^p \sum_{k=0}^{\infty}k^p |x_k|^p\sum_{n=k}^{\infty}\frac{1}{(n+1)^{p+1}} \\
&\leq2^p\norm{A}^p \sum_{k=0}^{\infty}k^p |x_k|^p\int_{k}^{\infty}\frac{1}{x^{p+1}}\,dx\\
&\leq  \frac{2^p}{p}\norm{A}^p \pnorm{x}^p.
\end{align*}

For (II) we use a similar method. Fix $ M>0 $,

\begin{align*}
 \sum_{n=0}^{M} \int_{\frac{1}{n+1}<|t|<\pi} & \frac{1}{(n+1)^{a}|t|^{a+1}}\Big| \sum_{k=0}^{n}a_{nk}x_k(e^{ikt}-1)\Big|^p \frac{dt}{2\pi}\\
   = & \int_{-\pi}^{\pi}  \sum_{n=0}^{M}\frac{1}{(n+1)^{a}} \chi_{[\frac{1}{n+1}<|t|<\pi]}(t) \Big| \sum_{k=0}^{n}a_{nk}x_k(e^{ikt}-1)\Big|^p \frac{dt}{2\pi |t|^{a+1}} \\
 \leq & \sum_{n=0}^{\infty}\left(\frac{1}{(n+1)^{a}}-\frac{1}{(n+2)^{a}}\right)\int_{\frac{1}{n+1}<|t|<\pi}S_n(t)\frac{dt}{2\pi|t|^{a+1}}  \\ & \,\,\,\,\,\,\,\,\,\,\,\,\,\,\,\,\,\,\,\, + \frac{1}{(M+1)^{a}}\int_{\frac{1}{M+1}<|t|<\pi}S_M(t)\frac{dt}{2\pi|t|^{a+1}} \\
 \leq & \sum_{n=0}^{\infty}\left(\frac{1}{(n+1)^{a}}-\frac{1}{(n+2)^{a}}\right)\int_{\frac{1}{n+1}<|t|<\pi}\sum_{k=0}^{n}|x_k|^p|e^{ikt}-1|^p\frac{dt}{2\pi |t|^{a+1}}\norm{A}^p  \\ & \,\,\,\,\,\,\,\,\,\,\,\,\,\,\,\,\,\,\, +\frac{2^{p}}{a\pi}\norm{A}^p \pnorm{x}^p \\  = &
 \sum_{k=0}^{\infty}|x_k|^p \sum_{n=k}^{\infty}\left(\frac{1}{(n+1)^{a}}-\frac{1}{(n+2)^{a}}\right)\int_{\frac{1}{n+1}<|t|<\pi}\frac{|e^{ikt}-1|^p}{2\pi|t|^{a+1}}dt\norm{A}^p \\
 & \,\,\,\,\,\,\,\,\,\,\,\,\,\,\,\,\,\,\,\, +\frac{2^{p}}{\pi a} \norm{A}^p \pnorm{x}^p\\
 =& \text{II}'+\frac{2^{p}}{a\pi} \norm{A}^p \pnorm{x}^p.
\end{align*}
We estimate the integral in $\text{II}'$ as follows:
\begin{align*}
\int_{\frac{1}{n}<|t|<\pi}\frac{|e^{ikt}-1|^p}{|t|^{a+1}}\frac{dt}{2\pi} &\leq 2^{p+1}k^p\int_{\frac{1}{n}}^{\frac{1}{k}}t^{p-a-1}\frac{dt}{2\pi} +2^{p+1}\int_{\frac{1}{k}}^{\pi}\frac{1}{t^{a+1}}\frac{dt}{2\pi} \\
&\leq \frac{2^p}{\pi}(\frac{1}{p-a}+\frac{1}{a}) k^{a}.
\end{align*}
Where the last estimate is obtained by directly computing the integrals and omitting the negative terms in the resulting expression. Here note that we can always assume that $a \leq 1 $, so $p>a.$ Consequently,

\begin{align*}
\text{II}'
&\leq \frac{2^p}{\pi}(\frac{1}{p-a}+\frac{1}{a}) k^{a}\sum_{n=k}^{\infty}\left(\frac{1}{(n+1)^{a}}-\frac{1}{(n+2)^{a}}\right) \leq \frac{2^p}{\pi}(\frac{1}{p-a}+\frac{1}{a}).
\end{align*}
\end{proof}

Corollary \ref{examples:Schur} is in fact a direct consequence of Lemma \ref{continuous:discrete} and Lemma \ref{Theorem:quantitative} combined with the simple estimate

\begin{lem}\cite[Lemma 2]{Brown2004}\label{Brown lem}
Let $\gamma>0$ and 
\[ \phi_{\gamma}(x) : =  \max\{0,(1-|x|) \}^\gamma . \]
Then 
\[ |\hat{\phi}_{\gamma}(x)|  \leq c(\gamma) (1+|x|)^{-\min\{ 1,\gamma \}-1}\]
\end{lem}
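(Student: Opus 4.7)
The plan is to treat small and large $|x|$ separately. Since $\phi_\gamma\in L^1(\bR)$ with $\|\phi_\gamma\|_{L^1}=2/(\gamma+1)$, the trivial $L^1\to L^\infty$ bound $|\hat{\phi}_\gamma(x)|\leq (2\pi)^{-1}\|\phi_\gamma\|_{L^1}$ handles any bounded range of $x$, so it suffices to establish the decay $|\hat{\phi}_\gamma(x)|\leq c(\gamma)|x|^{-\min\{1,\gamma\}-1}$ for $|x|\geq 1$. Using that $\phi_\gamma$ is even I would record $\hat{\phi}_\gamma(x)=\pi^{-1}\int_0^1(1-t)^\gamma\cos(xt)\,dt$ and perform one integration by parts with vanishing boundary terms (since $(1-t)^\gamma|_{t=1}=0$ for $\gamma>0$ and $\sin 0=0$) to arrive at the universal identity
\[
\hat{\phi}_\gamma(x)=\frac{\gamma}{\pi x}\int_0^1(1-t)^{\gamma-1}\sin(xt)\,dt.
\]

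For $\gamma\geq 1$ I would run a second integration by parts. For $\gamma\geq 2$ the remaining integrand $(1-t)^{\gamma-2}\cos(xt)$ is bounded and gives an $O(1)$ term, while the boundary term contributes a multiple of $1/x$, yielding the desired $|x|^{-2}$ decay after the $1/x$ prefactor. For $\gamma=1$ the identity collapses to the explicit $\hat{\phi}_1(x)=(1-\cos x)/(\pi x^2)$. In the borderline range $1<\gamma<2$ the remainder $\int_0^1(1-t)^{\gamma-2}\cos(xt)\,dt$ has an integrable endpoint singularity at $t=1$; I would control it by substituting $u=1-t$, expanding $\cos(x(1-u))$ via the trigonometric addition formula, and invoking convergence of the oscillatory integral $\int_0^\infty u^{\gamma-2}\cos u\,du$ (finite for $1<\gamma<2$), yielding an $O(|x|^{1-\gamma})$ bound on the remainder and hence an $O(|x|^{-\gamma-1})$ contribution, which is dominated by the leading $|x|^{-2}$ since $\gamma>1$.

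For $0<\gamma<1$ a second integration by parts is unavailable because $(1-t)^{\gamma-1}$ blows up at $t=1$. Instead, starting from the one-IBP identity above I would substitute $u=1-t$ to obtain
\[
\int_0^1(1-t)^{\gamma-1}\sin(xt)\,dt=\sin x\int_0^1 u^{\gamma-1}\cos(xu)\,du-\cos x\int_0^1 u^{\gamma-1}\sin(xu)\,du,
\]
and then dilate $v=|x|u$ so that each integral becomes $|x|^{-\gamma}$ times a truncation of the improper integrals $\int_0^\infty v^{\gamma-1}e^{\pm iv}\,dv=\Gamma(\gamma)e^{\pm i\pi\gamma/2}$, which converge because $v^{\gamma-1}$ is integrable near $0$ (as $\gamma>0$) and the exponential oscillates at infinity. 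This extracts a sharp $|x|^{-\gamma}$ from the bracket; together with the $1/x$ prefactor it gives the required $|x|^{-\gamma-1}=|x|^{-\min\{1,\gamma\}-1}$.

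The main subtlety is the range $1<\gamma<2$: one integration by parts gives only $|x|^{-1}$ decay, yet a second IBP is obstructed by the endpoint singularity of $(1-t)^{\gamma-2}$. The dilation-plus-oscillation device introduced for the $\gamma<1$ case is what converts this singularity into extra decay in $|x|$ and unifies the proof across all $\gamma>0$; tracking the dependence of the $\Gamma$-values and the $L^1$ norm on $\gamma$ produces the constant $c(\gamma)$.
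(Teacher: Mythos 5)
Your proof is correct, and it is worth noting that the paper itself does not prove this lemma at all: it is imported verbatim from Brown--Dai--Wang (\cite[Lemma 2]{Brown2004}), with only a one-line remark that the quantitative constant $c(\gamma)\lesssim \gamma^2$ for integer $\gamma$ follows by ``integrating by parts twice.'' Your argument is a complete, self-contained substitute, and its skeleton (one integration by parts killing the boundary terms, then a case split on $\gamma$) is exactly the calculus argument the authors allude to. Two small observations. First, in the range $1<\gamma<2$ your oscillatory-integral device is more than is needed: after the second integration by parts the remainder $\int_0^1(1-t)^{\gamma-2}\cos(xt)\,dt$ is bounded in absolute value by $\int_0^1(1-t)^{\gamma-2}\,dt=1/(\gamma-1)$, which already yields the required $O(|x|^{-2})$ without any cancellation; your finer $O(|x|^{1-\gamma})$ bound is correct but superfluous. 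Second, in the case $0<\gamma<1$ the dilated integrals run only up to $v=|x|$, so what you actually need is that the truncations $\int_0^R v^{\gamma-1}e^{\pm iv}\,dv$ are bounded \emph{uniformly} in $R$; this does follow from convergence of the improper integral (a continuous function of $R$ with a finite limit is bounded), but it deserves a sentence, since near infinity the convergence is only conditional and relies on $v^{\gamma-1}$ decreasing for $\gamma<1$. As a bonus, your computation for integer $\gamma\geq 2$ gives $|\hat{\phi}_\gamma(x)|\leq \frac{2\gamma}{\pi x^2}$ for $|x|\geq 1$, i.e.\ $c(\gamma)=O(\gamma)$, which is even slightly better than the $O(\gamma^2)$ growth the paper requires for the norm estimate in Corollary \ref{examples:Schur}.
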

In order to obtain the quantitative behaviour of the Schur multiplier norm claimed in Corollary \ref{examples:Schur} we need the  quantitative estimates for the constant $c(\gamma) \leq C \gamma^2, \gamma \in \bN$. This is a quite standard calculus argument, integrating by part twice the Fourier transform of of $\hat{\phi_{\gamma}}$, that we omit. 


We can now proceed with the proof of Theorem \ref{main:thm}.  






\begin{proof}[Proof of Theorem \ref{main:thm}]
Let $g\in \Mult(\ell^p_A(\omega))$. It is easy to check that since $\lim_n\frac{\omega_{n+1}}{\omega_n}=1$ then also  $S^*g \in \Mult( \ell^p_A(\omega)).$ Consider the natural isometry $U$ which maps $ \ell^p_A$ surjectively onto $\ell^p_A(\omega)$, and sents $ z^n $ to $\omega_n^{-1/p} z^n$. Consider also the operators $\hat{C_g}, \hat{M}_{S^*g}$ on $\ell^p_A$, where recall that $S^*$ is the backward shift operator,  which are represented by the matrices
\begin{equation}\label{Schur:relation} [\hat{C_g}] =  \big( \big( 1-\frac{k}{n+1} \big) \frac{\hat{g}(n+1-k)\omega_n^{1/p}}{\omega_k^{1/p}} \big)_{0\leq k \leq n} = \big( 1- \frac{k}{n+1} \big)_{0\leq k \leq n } \odot  [\hat{M}_{S^*g}]. \end{equation}

We claim that $U$ is an intertwining operator for the couples $C_g, \hat{C_g}$ and $M_{S^*g}, \hat{M}_{S^*g}$, in other words that for all polynomials $p$, 
\[ \hat{C_g}U(p) = U C_g (p) \quad \text{and} \quad \hat{M}_{S^*g} U(p) = U M_{S^*g} (p). \]

By linearity it is enough to check it for monomials. The verification is left for the reader. 

With this at hand, we can deduce that $\hat{M}_{S^*g}$ is bounded on $\ell^p_A$ since $ M_{S^*g} $ is bounded on $\ell^p_A(\omega).$ Then, equation \eqref{Schur:relation} and Corollary \ref{examples:Schur} allow us to conclude that $ \hat{C_g} $ is also bounded on $ \ell^p_A $ and again using the intertwining relation, $C_g$ is bounded on $\ell^p_A(\omega)$, equivalently $T_g$ is bounded on $\ell^p_A(\omega)$ i.e. $ \norm{T_g}_{\cB(\ell^p_A(\omega))}<+\infty.$

It remains to prove that the spectrum of $T_g$ is trivial whenever $g$ is in the closure of the space of multipliers in $T(\ell^p_A(\omega)).$ For this we shall follow the proof of \cite[Theorem 2.2]{Limani2020} mutatis mutandis. For a non zero complex number $\lambda$ we set, at least formally,  $R_g(\lambda):=(\id-\lambda^{-1}T_g)^{-1}$. Suppose now that $\lambda \in \bC \setminus\{0\}.$ By assumption we can find a function $h\in\Mult(\ell^p_A(\omega))$ such that $\norm{T_g-T_h}_{\cB(\ell^p_A(\omega))}=\norm{T_{g-h}}_{\cB(\ell^p_A(\omega))}$ is small enough such that $ T_{g-h} - \lambda \id$ is invertible. 
For the constant function $1\in \ell^p_A(\omega)$ we have that 
\[ R_g(\lambda) 1 = e^{\frac{h}{\lambda}} \in \Mult(\ell^p_A(\omega)) \subseteq \ell^p_A(\omega).\]
The fact that the exponential of a multiplier is still a multiplier is a consequence of the fact that $\Mult(\ell^p_A(\omega))$ is a Banach algebra. 
Finally if $f\in \ell^p_A(\omega)$ and $f(0)=0$, by \cite[p. 8]{Limani2020} we have that 
\[ R_g(\lambda)f= M_{e^{h/\lambda}} R_{g-h}(\lambda)M_{e^{-h/\lambda}} f +  M_{e^{h/\lambda}} R_{g-h}(\lambda)T_{e^{-h/\lambda}} f.  \]
The first term represents a bounded operator since $e^{\pm h/\lambda}$ is a multiplier and $\lambda \not\in \sigma(T_{g-h}| \ell^p_A(\omega) )$. The second term is bounded since from the first part of the theorem, ${e^{-h/\lambda}} \in \Mult(\ell^p_A(\omega)) \subseteq T(\ell^p_A(\omega)).$ 



\end{proof}

\section{A miscellaneous result}

\subsection{Schur multipliers for Hankel matrices} \label{HankelMult}
In this last section we would like to point out a connection with a result due to Ricard \cite{Ricard2002}. Recall that a {\it Hankel matrx} associated to a sequence of complex numbers $\alpha = \{ \alpha_n \}_{n\geq 0}$ is the infinite matrix 
\[H_\alpha := \begin{pmatrix}
    \alpha_0 & \alpha_1 & \alpha_2 & \cdots \\
    \alpha_1 & \alpha_2 & \alpha_3 & \ddots \\
    \alpha_2 & \alpha_3
    & \ddots   &        \\ 
    \alpha_3 & \ddots   &          &        \\
    \vdots   &          &          & 
\end{pmatrix}. \]

The following theorem concern Hankel operators which act boundedly on $\ell^2$.

\begin{thm}[\cite{Ricard2002}] The matrix 
\[ E := \Big( \frac{k+1}{k+n+1}\Big)_{k,n} \]
    is a Schur multiplier for bounded Hankel matrices on $\ell^2$, meaning that for for every Hankel matrix $H_\alpha$ which is bounded on $\ell^2$ the matrix $E \odot H_\alpha$ is also bounded on $\ell^2$.
\end{thm}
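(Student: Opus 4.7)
The approach is to reformulate Ricard's matrix in terms of the Fejer matrix from Corollary~\ref{examples:Schur} and invoke the bridge, announced earlier in Section~2, that elements of $\cS_2^L$ induce Schur multipliers on bounded Hankel operators on $\ell^2$. The starting point is the elementary identity
\[ E(i,j) = \frac{i+1}{i+j+1} = 1 - \frac{j}{i+j+1} = \cF(j, i+j), \qquad i, j \geq 0, \]
where $\cF(k, n) = 1 - k/(n+1)$ is the Fejer matrix. Along each anti-diagonal $\{(i,j) : i+j = m\}$, the entries of $E$ are precisely the Fejer weights indexed by $j$. Writing the bilinear form accordingly, for $x, y \in \ell^2$,
\[ \langle (E \odot H_\alpha) x, y \rangle = \sum_{m \geq 0} \alpha_m \, c_m, \qquad c_m := \sum_{j = 0}^m \cF(j, m) \, x(m-j) \, \overline{y(j)}, \]
which reduces the problem to controlling $\sum_m \alpha_m c_m$ by a constant times $\|H_\alpha\| \cdot \|x\| \cdot \|y\|$.

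Next, I would use the Fourier representation $\cF(j, m) = \widehat{\phi}_m(j)$, where $\phi_m \in L^1(\mathbb{T})$ is the $m$-th Fejer kernel, nonnegative and of unit $L^1$-mass. Substituting and exchanging summation with integration, one obtains
\[ c_m = \int_{\mathbb{T}} \phi_m(\theta) \, (x \ast \overline{y_\theta})(m) \, \frac{d\theta}{2\pi}, \qquad y_\theta(j) := e^{ij\theta} y(j), \]
with $\|y_\theta\|_{\ell^2} = \|y\|_{\ell^2}$. For each fixed $\theta$, the pairing $\sum_m \alpha_m (x \ast \overline{y_\theta})(m) = \langle H_\alpha x, y_\theta\rangle$ is bounded by $\|H_\alpha\| \cdot \|x\| \cdot \|y\|$ uniformly in $\theta$, which together with the $L^1$-normalization of $\phi_m$ would deliver the required estimate --- provided the $m$-dependence of the kernel can be absorbed.

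The main obstacle is exactly this $m$-dependence: the Fejer kernel $\phi_m$ varies with $m$, so one cannot simply factor it outside the sum $\sum_m \alpha_m \phi_m(\theta)(x \ast \overline{y_\theta})(m)$. Overcoming this is precisely the content of the bridge theorem announced in Section~2, that elements of $\cS_2^L$ act as Schur multipliers on bounded Hankel operators on $\ell^2$. Since $\cF \in \cS_2^L$ by Corollary~\ref{examples:Schur} with $\gamma = 1$, and the identification $E(i,j) = \cF(j, i+j)$ exhibits Ricard's matrix as the image of $\cF$ under the anti-diagonal reparametrization of Hankel entries, applying this bridge to $\cF$ yields Ricard's theorem.
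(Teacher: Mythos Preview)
Your argument has a genuine gap in the final step. The ``bridge'' you invoke is not the one the paper announces or proves. What Section~2 asserts (and Section~\ref{HankelMult} justifies via the Bonami--Bruna truncation theorem) is that an element $S\in\cS_2^L$ is \emph{itself} a Schur multiplier for bounded Hankel operators: since $S$ is lower triangular, $S\odot H_\alpha = S\odot \Pi(H_\alpha)$, and $\Pi(H_\alpha)$ is bounded by Bonami--Bruna. What you need is something different: that if $S\in\cS_2^L$ then the \emph{anti-diagonally reparametrised} matrix $\tilde S(i,j):=S(j,i+j)$ is a Hankel Schur multiplier. Your identity $E(i,j)=\cF(j,i+j)$ is correct, but $E$ is a full matrix, not equal to $\cF$, so knowing $\cF\in\cS_2^L$ does not by itself control $E\odot H_\alpha$. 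The reparametrised bridge you rely on is nowhere established in the paper, and it is not obvious how to deduce it from the definition of $\cS_2^L$ (for instance, the natural auxiliary matrix $(S(j,m)\alpha_m)_{j\le m}$ you would want to hit with $S$ is $S\odot T$ for $T_{m,j}=\alpha_m$, which is not a bounded lower triangular operator in general).

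For comparison, the paper (in its proof of the $E_\lambda$ generalisation, which at $\lambda=1$ recovers Ricard's result) does not use the anti-diagonal identity at all. It splits $E=X+Y^T$ into lower and strictly upper triangular parts and shows directly that $X,Y\in\cS_2^L$. This is where the real work lies: the entries of $X$ are $(k+1)/(k+n+1)$, not the Fej\'er entries $(n+1-k)/(n+1)$, and passing from one to the other uses the quantitative bound $\norm{\cF_N}_{\cS_2^L}\le CN^2$ to locate the spectrum of $\cF$ in $\cS_2^L$ inside $\overline{\bD}$, invert $(\lambda+1)\mathbbm{1}_L-\cF$ in the algebra $\cS_2^L$, and then do some further algebra with bounded Hilbert-type correction matrices. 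Only once $X,Y\in\cS_2^L$ is established does Bonami--Bruna finish the job via $E\odot H_\alpha = X\odot\Pi(H_\alpha)+[Y\odot(H_\alpha-\Pi(H_\alpha))^T]^T$. Your Fourier sketch in the first two paragraphs is a reasonable heuristic, but as you yourself note it founders on the $m$-dependence of the kernel; the resolution is not the bridge you cite but the triangular decomposition and spectral-radius argument above.
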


It worth's mentioning that this result apart from the independent interest that it might have, it answers a question of Davidson and  Paulsen about CAR-valued Foguel-Hankel operators which are similar to a contraction (for more details see \cite{Ricard2002} and \cite{Davidson1997}). The following theorem can be considered as an asymmetric version of Ricard's theorem.

\begin{thm} For $ \Re \lambda >0$, the matrices
\[ E_\lambda :=  \big(  \frac{k+1}{ k + \lambda n+1 }\big)_{k,n}\]
are Schur multipliers for bounded Hankel operators on $\ell^2.$
\end{thm}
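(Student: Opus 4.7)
The plan is to reduce the claim to Ricard's theorem (the case $\lambda=1$ stated just above) by splitting $E_\lambda = E + (E_\lambda - E)$ and proving that the correction $Y_\lambda := E_\lambda - E$ belongs to $\cS_2$, i.e.\ is a Schur multiplier for \emph{all} bounded operators on $\ell^2$, not only for Hankels. Once this is achieved, for any bounded Hankel $H_\alpha$ we immediately obtain
\[
\|E_\lambda \odot H_\alpha\|_{\cB(\ell^2)} \;\le\; \|E \odot H_\alpha\|_{\cB(\ell^2)} + \|Y_\lambda \odot H_\alpha\|_{\cB(\ell^2)} \;\le\; \big(C_{\mathrm{Ricard}} + \|Y_\lambda\|_{\cS_2}\big)\,\|H_\alpha\|_{\cB(\ell^2)}.
\]

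A partial-fraction computation gives $(Y_\lambda)_{k,n} = (1-\lambda)\,\frac{(k+1)\,n}{(k+\lambda n+1)(k+n+1)}$. Inserting the elementary integral representations $\frac{1}{k+\lambda n+1} = \int_0^1 t^{k+\lambda n}\,dt$ and $\frac{1}{k+n+1} = \int_0^1 s^{k+n}\,ds$ (both valid for $\Re\lambda > 0$) yields the Hilbert-space factorization
\[
(Y_\lambda)_{k,n} \;=\; (1-\lambda)\!\int_0^1\!\!\int_0^1 \phi_k(t,s)\,\overline{\psi_n(t,s)}\,dt\,ds
\]
in $H := L^2([0,1]^2)$, with $\phi_k(t,s) := (k+1)(ts)^k$ and $\psi_n(t,s) := n\,s^n\,t^{\bar\lambda n}$. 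By the classical characterization of Schur multipliers on $\cB(\ell^2)$---every such matrix admits a factorization $m_{k,n} = \langle \phi_k,\psi_n\rangle_H$ with uniformly bounded $\phi_k,\psi_n$, equivalently, every Schur multiplier on $\ell^2$ is completely bounded, cf.\ \cite[Theorem~5.1]{Pisier2001}---it remains to check that $\sup_k\|\phi_k\|_H$ and $\sup_n\|\psi_n\|_H$ are finite. Elementary integration gives
\[
\|\phi_k\|_H^2 \;=\; \frac{(k+1)^2}{(2k+1)^2} \;\le\; 1, \qquad \|\psi_n\|_H^2 \;=\; \frac{n^2}{(2n+1)(2\Re\lambda\cdot n+1)} \;\le\; \frac{1}{4\Re\lambda},
\]
both uniformly bounded precisely because $\Re\lambda>0$, yielding $\|Y_\lambda\|_{\cS_2} \le |1-\lambda|/(2\sqrt{\Re\lambda})$.

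The main obstacle, I expect, is guessing the correct Hilbert space. The naive one-variable attempt $(E_\lambda)_{k,n} = \int_0^1 (k+1)\,t^k\cdot t^{\lambda n}\,dt$ fails because $\|(k+1)t^k\|_{L^2(0,1)}\asymp\sqrt{k}$ blows up in $k$---entirely consistent with $E\notin\cS_2$. The trick is that subtracting the Ricard term from $E_\lambda$ introduces an extra factor $\int_0^1 s^{k+n}\,ds$, and this additional $s$-integration supplies exactly the missing $1/(2k+1)$ decay in $k$ (via the $s^k$ in $\phi_k$) while the accompanying $s^n$ simultaneously yields the decay in $n$ needed for $\sup_n\|\psi_n\|_H<\infty$. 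The hypothesis $\Re\lambda>0$ enters in one and only one place: it is exactly what makes $\int_0^1 t^{2\Re\lambda\cdot n}\,dt$ bounded uniformly in $n$.
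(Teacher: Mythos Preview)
Your proof is correct and takes a genuinely different route from the paper's. You reduce to Ricard's theorem (the case $\lambda=1$) and show that the correction $E_\lambda - E$ lies in the \emph{full} Schur multiplier algebra $\cS_2$ via an explicit Hilbert-space factorization in $L^2([0,1]^2)$, invoking the Grothendieck--Haagerup characterization of $\cS_2$ (the reference \cite[Theorem~5.1]{Pisier2001} indeed gives this). The paper instead gives an independent proof that does not use Ricard's result at all: it splits $E_\lambda$ into its lower and (transposed) strictly upper triangular parts, invokes the Bonami--Bruna theorem to reduce to showing each piece belongs to $\cS_2^L$, and then uses the machinery developed earlier---the membership of the Fej\'er matrices $\cF_\gamma$ in $\cS_2^L$ together with the quantitative norm estimate of Corollary~\ref{examples:Schur}, which yields the spectral-radius bound $\sigma(\cF\,|\,\cS_2^L)\subseteq\overline{\bD}$---to invert $(\lambda+1)\mathbbm{1}_L-\cF$ in $\cS_2^L$ and conclude. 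Your argument is shorter, avoids both Bonami--Bruna and the $\cS_p^L$ theory, and yields the explicit bound $\|E_\lambda-E\|_{\cS_2}\le |1-\lambda|/(2\sqrt{\Re\lambda})$; the paper's argument, by contrast, recovers Ricard's theorem itself as the special case $\lambda=1$, and is meant precisely to showcase the reach of the $\cS_p^L$ framework.
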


\begin{proof} 
We decompose the matrix $E_\lambda$ to its lower and upper triangular parts;
\[ E_\lambda: = \Big( \frac{k+1}{ k + \lambda n+1 } \Big)_{0\leq k \leq n} + \Big( \frac{n+1}{\lambda k + n +1 } \Big)_{0\leq k < n}^{T} : = X + Y^T.\]
It suffices to prove that $ X, Y \in \cS_2^L $ because by  a theorem of Bonami and Bruna \cite{Bonami1999}, if $H_\alpha$ is a  bounded Hankel operator on $\ell^2$ then its lower triangular truncation, which remember we denote it by $\Pi(H_\alpha)$ is also bounded on $\ell^2$. Hence,
\[ E_\lambda \odot H_\alpha = X \odot \Pi( H_\alpha ) + [Y \odot (H_\alpha - \Pi(H_\alpha))^T]^T\in \cB(\ell^2). \]

	Let us denote by $X^{\odot N}$ the $N$th Hadamard power of a matrix (i.e. the matrix with every entry elevated to the power $N$).
  By the quantitative estimate in Corollary \ref{examples:Schur}
  \begin{align*} \lim_{N\to \infty } \norm{\cF^{\odot N}}_{\cS_p^L}^{\frac{1}{N}} & = \lim _{N\to \infty }\norm{\cF_{N}}_{\cS_2^L}^{\frac{1}{N}}   \leq \lim _{N\to \infty }(CN^2)^\frac{1}{N} = 1.
  \end{align*}
 Therefore by the spectral radius formula have 
  \[ \sigma ( \cF | \cS_2^L ) \subseteq \overline \bD. \] 
  
  Hence, if $\Re \lambda > 0, \lambda +1  \not\in\sigma(\cF|\cS^L_2)$, in other words the element $ (\lambda+1) \mathbbm{1}_L - \cF $ is invertible in $\cS^L_2 $.  But the inverse is obtained just by taking the algebraic inverse of the entries in the lower triangular part of the matrix, i.e. 
  \[ ((\lambda+1)\mathbbm{1}_L - \cF )^{-1}=( \frac{n+1}{k+\lambda (n +1)})_{0\leq k \leq n }.\]

Since $\cS^L_2$ is an algebra,
\[ (\mathbbm{1}_L - \cF ) \odot ((\lambda+1)\mathbbm{1}_L - \cF )^{-1}  = \big( \frac{k}{k+\lambda(n+1)} \big)_{0\leq k \leq n} \in \cS^L_2.\]

We further have,
\begin{equation}\label{dif} \frac{k+1}{k+\lambda n +1 } - \frac{k}{k + \lambda n + \lambda } = \frac{k+1}{k+\lambda n + \lambda }\frac{\lambda -1 }{ k + \lambda n + 1 } +\frac{1}{\lambda} \frac{1}{\frac{k}{\lambda}+ n + 1 } . \end{equation}
Which proves that also the matrix 
\[ X= \Big( \frac{k+1}{k+\lambda n +1} \Big)_{k\leq n} \]
is a Schur multiplier for lower triangular matrices, since the first matrix on the right hand side of equation \eqref{dif} is bounded (hence in $\cS_2^L $ by \cite[Proposition 2.1]{Bennett1977}) as a product of a bounded Hilbert type matrix and a matrix in $\cS_2^L$ and the second matrix is a bounded Hilbert type matrix.

By a similar reasoning we can show that the matrix $((\lambda+1)\mathbbm{1}_L - \cF )^{-1}$
   also belongs to $\cS_2^L$ because it differs from $Y$ by a  matrix in $\cB(\ell^2)$.

\end{proof}

 We should also mention that our approach is completely different from the one followed by Ricard, which uses Hardy space theory.

It would be interesting to know if the theorem of Bonami and Bruna remains valid when $p\neq 2$, or for weighted $\ell^2$ spaces, but this appears to be a subtle question. The problem seems to be that the proof of Bonami and Bruna uses the Nehari theorem and also some delicate estimates for the bilinear Hilbert transform due to Lacey and Thiele \cite{Lacey1997}.

\subsection*{Acknowledgments}  The authors would like to thank professor A. Siskakis for stimulating discussions on the problem. In fact his question (private communication) whether $\Mult(\ell^p) \subseteq T(\ell^p)$ or not was the initial motivation for considering the problem.
We would also like to thank the anonymous referee for his useful comments. In particular her/his suggestions have lead to a simpler proof of the quasi-nilpotency claim in the main theorem.
\bibliographystyle{plain}
\bibliography{Bibliography}

\end{document}